\DeclareMathAlphabet{\mathcal}{OMS}{cmsy}{m}{n}
\title{Circumcentering Reflection Methods for Nonconvex Feasibility Problems}
\author{Neil Dizon\\CARMA\\University of Newcastle \and Jeffrey Hogan\\CARMA\\University of Newcastle \and Scott B. Lindstrom\\Hong Kong Polytechnic University}
\date{\today}
\def\R{\hbox{$\mathbb R$}}
\def\N{\hbox{$\mathbb N$}}
\def\CRM{\hbox{${\rm CRM}$}}
\def\bepsilon{\hbox{$\mathbb \epsilon$}}
\def\Id{\hbox{\rm Id}}
\newcommand{\HH}{\ensuremath{\mathcal H}}
\newcommand{\Fix}{\ensuremath{\operatorname{Fix}}}
\newcommand{\calB}{\ensuremath{\mathcal B}}
\renewcommand{\vec}[1]{\mathbf{#1}}
\newtheorem{theorem}{Theorem}[section]
\newtheorem{lemma}[theorem]{Lemma}
\newtheorem{proposition}[theorem]{Proposition}
\theoremstyle{definition}
\newtheorem{definition}[theorem]{Definition}
\newtheorem{example}{\it Example}
\newtheorem{remark}[theorem]{Remark}
\begin{document}

\date{\today}

\maketitle

\begin{abstract}
	Recently, circumcentering reflection method (CRM) has been introduced for solving the feasibility problem of finding a point in the intersection of closed constraint sets. It is closely related with Douglas--Rachford method (DR). We prove local convergence of CRM in the same prototypical settings of most theoretical analysis of regular nonconvex DR, whose consideration is made natural by the geometry of the phase retrieval problem. For the purpose, we show that CRM is related to the method of subgradient projections. For many cases when DR is known to converge to a feasible point, we establish that CRM locally provides a better convergence rate. As a root finder, we show that CRM has local convergence whenever Newton--Raphson method does, has quadratic rate whenever Newton--Raphson method does, and exhibits superlinear convergence in many cases when Newton--Raphson method fails to converge at all. We also obtain explicit regions of convergence. As an interesting aside, we demonstrate local convergence of CRM to feasible points in cases when DR converges to fixed points that are not feasible. We demonstrate an extension in higher dimensions, and use it to obtain convergence rate guarantees for sphere and subspace feasibility problems. Armed with these guarantees, we experimentally discover that CRM is highly sensitive to compounding numerical error that may cause it to achieve worse rates than those guaranteed by theory. We then introduce a numerical modification that enables CRM to achieve the theoretically guaranteed rates. Any future works that study CRM for product space formulations of feasibility problems should take note of this sensitivity and account for it in numerical implementations.
\end{abstract}

\paragraph{Mathematics Subject Classification (MSC 2020):} 90C26 \and 65K10 \and 47H10 \and 49M30

\noindent {\bfseries{Keywords:}}
Douglas--Rachford, feasibility, projection methods, reflection methods, iterative methods, circumcentering

\section{Introduction}\label{s:intro}

The Douglas--Rachford method (\emph{DR}) is frequently used to solve \emph{feasibility problems} of the form 
\begin{equation}\label{feasibility_problem}
	\text{find }\ \vec{x} \in A \cap B,
\end{equation}
where, here and throughout, $A$ and $B$ are closed subsets of a finite dimensional Hilbert space $\HH$ and $A \cap B \neq \emptyset$. For such problems the method consists of iterating the DR operator, which is an averaged composition of two over-relaxed projection operators defined as follows:
\begin{equation}\label{def:DRoperator}
	T_{A,B}:= \frac{1}{2} R_BR_A+\frac{1}{2}\Id \quad \text{with}\quad R_C:=2P_C-\Id,
\end{equation}
where, here and throughout, $\Id$ is the identity map and the projection map $P_S$ is as defined below in \eqref{def:P}. The DR operator owes its colloquial name to its indirect introduction in the context of nonlinear heat flow problems \cite{DR}, though it was independently discovered by Fienup in the nonconvex setting of \emph{phase retrieval} \cite{fienup1982phase}, and so it has been known under various other names \cite{LSsurvey}.

Its broader versatility in the nonconvex context was highlighted in by Elser and Gravel \cite{GE}, who applied it to solve various nonconvex combinatorial problems modeled as feasibility problems with stochastic constraints. The method has since been applied to a host of other discrete feasibility problems, including Sudoku puzzles \cite{ABT1,ABT1b}, matrix completion \cite{ABT2,BT}, graph coloring \cite{AC,artacho2018enhanced}, and bit retrieval \cite{elser2018complexity}, among others. For a more comprehensive overview of its history, including the broader context of DR as a splitting method in solving optimization problems, see for example, \cite{LSsurvey}. For more on the use of DR for solving both nonconvex and convex feasibility problems, refer to \cite{artacho2019douglas}.

The aforementioned seminal work of Elser and Gravel \cite{GE} piqued the interests of Borwein and Sims, who in 2011 made the first rigorous attempt at analysing the behaviour of DR in the nonconvex setting of hypersurfaces \cite{BS}. The spiraling convergence pattern they observed characterizes performance when DR is applied to many other nonconvex hypersurface feasibility problems, which we recall in Section~\ref{s:hypersurfaces}. 

In the convex setting, the idea of \emph{circumcentering} with the reflections has been recently introduced \cite{bauschke2018circumcentermappings,bauschke2018circumcenters,behling2018linear,circumcentering}. Other methods have also been designed, based on using the past performance to predict future iterates \cite{lindstrom2020computable,poon2019trajectory}. What motivates the \emph{present} work is the tendency of splitting methods to solve \emph{nonconvex} problems \cite{AB,BLSSS,BS,Franklin,LLS,LSS}, and the appetizing prospect of accelerating convergence in the nonconvex setting. 

For the more general monotone inclusion problem, Douglas--Rachford method is dual to the \emph{Alternating Direction Method of Multipliers} (\emph{ADMM}) \cite{Gabay,LSsurvey}. This motivated the recent introduction of a class of novel methods \cite{lindstrom2020computable} that, for some feasibility problems, includes CRM. The author introduced this class with the motivation of building algorithms that are primal/dual implementable. The simplest of them, $L_T$, may also be used for a feasibility problem. We will include it for comparison in our computed examples; for some problems, it exhibits apparently quadratic convergence, which is certainly interesting. However, as $L_T$ was derived through a somewhat different framework to what we develop here, we will not make a theoretical comparison to it.

\subsubsection*{Goal}

The goal of the present work is to furnish local convergence analysis for CRM in related nonconvex settings to those considered for DR, namely, the case when one set is a hyperplane and the other is a hypersurface that can be represented locally by the graph of a function \cite{AB,Benoist,BLSSS,BS,DT,LSS}. The purpose and value of this investigation (and those others like it) is, ultimately, not to develop a superior root finder on $\R$, but rather to understand the performance of CRM more generally. Since the construction of a single step of CRM is always computed in a 2-dimensional subspace, the study of 2-dimensional problems offers us such insights, as 2-dimensional problems are often prototypical of the 2-dimensional slices of problems in higher dimensions. This is why they have been used so often in the literature. As a natural example, we furnish Examples~\ref{ex:10spherehyperplane} and \ref{ex:10sphereline}, which show that our results on plane curves admit quadratic convergence rate guarantees for spheres and subspaces. When our numerical experiment in Example~2 at first achieves only a linear rate, we then \textit{know}, because a quadratic rate is \textit{guaranteed} by theory, that the observed linear rate is a consequence of very small numerical errors. Armed with this knowledge, we locate the source of the error, and compensate for it to recover the superior convergence rate guaranteed by the theory. The discovery of this sensitivity to small numerical error will be invaluable to further efforts to study CRM with the usual product space formulation of the feasibility problem, for reasons we explain.

\subsection{Outline}

The remainder of this paper is outlined as follows. In Section~\ref{s:preliminaries}, we provide the preliminaries on Douglas--Rachford and CRM. In Section~\ref{s:CT}, we introduce a modified version of CRM that is generically proper, and show that it has a reasonable fixed point property (Proposition~\ref{prop:fixed}). In Section~\ref{s:hypersurfaces}, we show that CRM, in the case when the sets are a hyperplane and the graph of a function on $\R^{\eta}$, is related to subgradient projection on a lower dimensional space (Theorem~\ref{thm:subgradientdescent}). In $\R^2$, the subgradient projection method is just Newton--Raphson method. In Section~\ref{s:planecurves}, we establish local convergence of CRM to a feasible point in the hypersurface settings considered previously for DR, along with convergence rate guarantees that are quadratic in many cases (Theorem~\ref{thm:main}). Our analysis, in cases with quadratic convergence, exploits the connection with Newton--Raphson method. More interestingly, in many cases when Newton--Raphson method fails to converge at all, we use the generalized angle bisector theorem to show local superlinear convergence of CRM (Lemma~\ref{lem:concave_infinity}) in $\mathbb{R}^2$. In Section~\ref{s:Rn}, we provide some examples to show one way in which our rate guarantees from Section~\ref{s:planecurves} may extend to guarantees for certain problems in $\R^\eta$. Specifically, we show quadratic convergence rates for spheres and affine subspaces, the problem that has gained particular interest for being prototypical of phase retrieval. We also provide numerical evidence that the convergence rate for CRM when one set is a subspace can be particularly sensitive to compounding numerical error, and we explain how we overcame this sensitivity to achieve the theoretical rate. We conclude in Section~\ref{s:conclusion}.

\section{Preliminaries}\label{s:preliminaries}

Splitting methods such as the Douglas--Rachford method (DR) are frequently used to solve feasibility problems as in \eqref{feasibility_problem}. When the sets of interests are closed, we often employ iterative algorithms that make use of the projector operator
$$\mathbb{P}_S\vec{x} := \left \{ \vec{z} \in S : \|\vec{x} - \vec{z}\| = \inf_{\vec{v} \in S}\|\vec{x} - \vec{v}\|\right \}.$$
In the nonconvex setting, $\mathbb{P}_S$ is a set-valued map where image values may contain more than one point. For the cases we will consider in this paper, $\mathbb{P}_S$ is always nonempty, and we simplify the exposition by working with a selector 
\begin{equation}\label{def:P}
	P_S:\HH \rightarrow S: \vec{x} \mapsto P_S\vec{x} \in \mathbb{P}_S\vec{x}.
\end{equation}

The classical result for the feasibility problem using DR, when $A$ and $B$ are closed and convex, is a consequence of a more general result of Lions \& Mercier \cite{LM}. We provide the feasibility-specific version.

\begin{theorem}[Lions \& Mercier \cite{LM}]\label{thm:LionsandMercier} Let
	$A,B \subset \HH$ be closed, convex, and nonempty, with $A \cap B \neq \emptyset$ and the sum of their normal cone operators $N_A + N_B$ also maximal monotone. For any $\vec{x}_0\in \HH$, the sequence given by $\vec{x}_{n+1}=T_{A,B}\vec{x}_n$ converges to some $\vec{v}\in \HH$ as $n\rightarrow \infty$ such that $P_A \vec{v} \in A \cap B$.
\end{theorem}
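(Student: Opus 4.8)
The plan is to reduce the feasibility-specific statement to the general monotone-operator result of Lions \& Mercier, which concerns the resolvent iteration for the sum of two maximal monotone operators. First I would recall that for a closed convex nonempty set $C$, the projection $P_C$ is the resolvent $J_{N_C} = (\Id + N_C)^{-1}$ of the (maximal monotone) normal cone operator $N_C$. A short computation then shows that the DR operator $T_{A,B} = \tfrac12(\Id + R_B R_A)$ coincides with the standard Douglas--Rachford splitting operator $\tfrac12\bigl(\Id + (2J_{N_B}-\Id)(2J_{N_A}-\Id)\bigr)$ associated with the pair $(N_A, N_B)$. Since $N_A$ and $N_B$ are maximal monotone and, by hypothesis, $N_A + N_B$ is maximal monotone with $A \cap B = (N_A+N_B)^{-1}(0) \neq \emptyset$, the Lions \& Mercier convergence theorem for the sum problem applies verbatim: the orbit $\vec{x}_{n+1} = T_{A,B}\vec{x}_n$ converges to some $\vec{v} \in \HH$ with $\vec{v} \in \Fix T_{A,B}$, and moreover $J_{N_A}\vec{v} \in \operatorname{zer}(N_A + N_B)$.

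Next I would translate the conclusion back into feasibility language. By the characterization of fixed points of the DR operator, $\vec{v} \in \Fix T_{A,B}$ if and only if $P_A \vec{v} = P_B R_A \vec{v}$, and this common point lies in $A \cap B$; equivalently, since $J_{N_A} = P_A$, the Lions--Mercier conclusion $J_{N_A}\vec{v} \in \operatorname{zer}(N_A+N_B) = A \cap B$ is exactly the assertion $P_A \vec{v} \in A \cap B$. Thus the feasibility statement follows immediately once the operator-theoretic identifications are in place. I would also note in passing that the convexity of $A$ and $B$ guarantees the projections are single-valued, so the selector $P_A$ in \eqref{def:P} is in fact a genuine function and there is no ambiguity in the statement.

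The only genuinely delicate point is the hypothesis that $N_A + N_B$ be maximal monotone, since in infinite dimensions the sum of two maximal monotone operators need not be maximal monotone without a constraint qualification; but here it is assumed outright, so no work is needed. (In $\HH$ finite dimensional one could even invoke a Rockafellar-type qualification such as $\operatorname{ri} A \cap \operatorname{ri} B \neq \emptyset$, but the cleanest route is simply to take the maximal monotonicity of the sum as given, exactly as stated.) Hence the main "obstacle" is really just bookkeeping: verifying carefully that the problem's over-relaxed projection operators $R_C = 2P_C - \Id$ are precisely the reflected resolvents $2J_{N_C} - \Id$, so that citing \cite{LM} is legitimate rather than merely plausible. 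Once that identification is recorded, the proof is a one-line appeal to Theorem in \cite{LM} together with the fixed-point characterization of $T_{A,B}$.
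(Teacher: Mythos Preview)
Your proposal is correct and matches the paper's approach: the paper does not actually prove this theorem but merely states it as the feasibility-specific consequence of the general Lions--Mercier result \cite{LM}, which is exactly the reduction you outline via the identifications $P_C = J_{N_C}$ and $R_C = 2J_{N_C}-\Id$. Your added bookkeeping (verifying the resolvent identification and translating $J_{N_A}\vec{v} \in \operatorname{zer}(N_A+N_B)$ into $P_A\vec{v} \in A\cap B$) is more explicit than what the paper provides, but the route is the same.
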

Note that the condition that $N_A+N_B$ be monotone may be relaxed for the convex feasibility problem \cite[Fact~5.9]{BCL}. The operator described in \eqref{def:DRoperator} is called the Douglas--Rachford operator.

Generically, the fixed points of this operator may not themselves be feasible; see \cite[Figure 5b]{LSsurvey} for a pictorial example. This example also illustrates the useful fact that, even in the nonconvex context, fixed points satisfy (see~\cite[Proposition 3.1]{LSsurvey})
\begin{equation}\label{eqn:fixedpointsDR}
	(\vec{x} \in \Fix T_{A,B}) \implies  \mathbb{P}_A\vec{x} \cap A \cap B \neq \emptyset.
\end{equation}

Various extensions of DR to accommodate $N$-set (where $N>2$) feasibility problems have also been considered. The most utilized is the product space reformulation \cite{GE,Pierra}. With such a reformulation, convergence is guaranteed for $N$ closed and convex sets (with nonempty intersection) by the two set result in Theorem~\ref{thm:LionsandMercier}. An augmented discussion of the above details are given in the recent survey article \cite{LSsurvey}.

\subsection{Circumcentering}\label{s:circumcentering}

Given three points $\vec{u},\vec{v},\vec{w} \in \HH$, we denote the circumcenter by $C(\vec{u},\vec{v},\vec{w})$ and define it to be the point equidistant to $\vec{u},\vec{v},\vec{w}$, and lying on the affine subspace they generate.

It may be readily verified that when $\vec{u},\vec{v}$ and $\vec{w}$ are not colinear, $C(\vec{u},\vec{v},\vec{w})$ is the intersection of the perpendicular bisectors of the sides of the triangle formed by $\vec{u},\vec{v}$ and $\vec{w}$. Figure~\ref{fig:circumcenter} illustrates that $C(\vec{u},\vec{v},\vec{w})$ is not necessarily contained within the convex hull of the triangle formed by $\vec{u},\vec{v},$ and $\vec{w}$.

\begin{figure}
	\begin{center}
		\begin{tikzpicture}[scale=2.0]
			\draw [fill,black] (-1,1) circle [radius=0.015];
			\node [above left] at (-1,1) {$\vec{u}$};
			
			\draw [fill,black] (0,0) circle [radius=0.015];
			\node [below left] at (0,0) {$\vec{v}$};
			
			\draw [fill,black] (1,0) circle [radius=0.015];
			\node [below right] at (1,0) {$\vec{w}$};		
			
			\draw [gray] (-1,1) -- (0,0) -- (1,0) -- (-1,1);
			
			\draw [blue] (-1,0) -- (1,2);
			
			\draw [blue] (0.5,-0.25) -- (0.5,2);
			
			\draw [blue] (-0.25,0) -- (.75,2);
			
			\draw [fill,purple] (0.5,1.5) circle [radius=0.030];
			\node [above left,purple] at (0.5,1.5) {$C(\vec{u},\vec{v},\vec{w})$};
		\end{tikzpicture}
	\end{center}
	\caption{The circumcenter of a triangle.}\label{fig:circumcenter}
\end{figure}

When $\{\vec{u},\vec{v},\vec{w}\}$ has cardinality 1, the definition clearly implies that $C(\vec{u},\vec{v},\vec{w})=\vec{u}=\vec{v}=\vec{w}$. When $\{\vec{u},\vec{v},\vec{w}\}$ has cardinality 2, $C(\vec{u},\vec{v},\vec{w})$ is the average of the two distinct points.

When ``circumcentering'' the reflections, we compute a new iterate by taking the circumcenter of $\vec{x},R_A\vec{x}$, and $R_BR_A\vec{x}$. Following \cite{behling2019convex}, we denote this operation by
\begin{equation}
	\CRM(\vec{x}) := C(\vec{x},R_A\vec{x},R_BR_A\vec{x}).
\end{equation}
The case where $\vec{x},R_A\vec{x},$ and $R_BR_A\vec{x}$ are distinct and colinear does not occur when the sets in question are intersecting affine subspaces as in \cite{behling2018linear,circumcentering}. It may also be avoided when the product space method is used for convex feasibility problems as in \cite{behling2019convex}. In such a case, the operator $\CRM$ is said to be \emph{proper}. Sufficient conditions for the operator $\CRM$ to be proper are given in \cite{bauschke2018circumcentermappings,bauschke2018circumcenters}. For a convex example where the operator $\CRM$ is not proper, let $A$ be the unit ball in $\R^2$, $B=\{(\lambda,3/4)\;|\; \lambda \in \R\}$, and $\vec{x} = (0,2)$. Then $R_A \vec{x} = (0,0)$ and $R_BR_A\vec{x} = (0,3/2)$, and so $\vec{x}, R_A\vec{x}, R_BR_A\vec{x}$ are distinct and colinear.

For explicit formulas for computing the circumcenter, and for a generalization of the circumcenter of a triangle to the circumcenter of sets containing finitely many points, see \cite{bauschke2018circumcenters}. For extensions of linear convergence results to infinite dimensional spaces, refer to \cite{bauschke2019linear}. A primal/dual centering approach that does not make use of reflection substeps was introduced in \cite{lindstrom2020computable}.

\section{A generically implementable nonconvex adaptation}\label{s:CT}

For nonconvex feasibility problems, $\CRM$ generically fails to be proper, and so we must choose a reasonable definition for the mapping in the nonconvex setting. This is important to ensure numerical stability in the computation, because applying the circumcentering operator to colinear (or nearly colinear) substeps may result in errors that disrupt computation. For a nonconvex problem, we may not know when this colinearity or near-colinearity might occur. The definition, therefore, should rely on conditions that are easy to check from a computational standpoint. 

Fortunately, a clear choice presents itself. When $\vec{x},R_A\vec{x}$, and $R_BR_A\vec{x}$ are colinear, the possibilities are as follow.
\begin{enumerate}[label=(\roman*)]
	\item\label{colinear1} $R_B\vec{x} = R_A\vec{x} = \vec{x}$, in which case $\vec{x} \in A \cap B \cap \Fix T_{A,B}$ and $\CRM(\vec{x}) = T_{A,B}\vec{x}$.
	\item\label{colinear2} $R_BR_A\vec{x} \neq R_A\vec{x}=\vec{x}$ or $R_BR_A\vec{x}=R_A\vec{x} \neq \vec{x}$, in which case the average of the two distinct points is just $\frac12 \vec{x} + \frac12 R_BR_A\vec{x} = T_{A,B}\vec{x}$, and so again $\CRM(\vec{x}) = T_{A,B}\vec{x}$.
	\item\label{colinear3} $R_BR_A\vec{x} = \vec{x} \neq R_A\vec{x}$, in which case $\vec{x} \in \Fix T_{A,B}$, and so $P_A\vec{x} \in A \cap B$. In this case, $\CRM(\vec{x}) = \frac{1}{2}R_A\vec{x} + \frac{1}{2}\vec{x} = P_A\vec{x} \in A \cap B$.
	\item\label{colinear4} $R_BR_A\vec{x},\; R_A\vec{x},$ and $\vec{x}$ are distinct, in which case $\CRM(\vec{x}) = \emptyset$ while $T_{A,B}\vec{x} \neq \emptyset$.
\end{enumerate}
Altogether, in cases \ref{colinear1} and \ref{colinear2} $\CRM(\vec{x})$ and $T_{A,B}\vec{x}$ coincide, and in case \ref{colinear3} it does not matter whether one updates with $\CRM(\vec{x})$ or $T_{A,B}\vec{x}$, since $P_A\vec{x}$ solves the feasibility problem. We choose to update with $T_{A,B}\vec{x}$ in case \ref{colinear4}, which is consistent but is also a reasonable choice, given what is known about the ``searching'' behaviour of DR for many nonconvex problems (specifically, it sometimes walks in a straight line before finding the local basin; see \cite{BLSSS}). Finally, for the sake of simplicity, we will also choose to ``update'' with $T_{A,B}\vec{x}$ in case \ref{colinear3}; in this way, our definition differs in the convex setting from that in \cite{circumcentering}, but it does not differ in a consequential way, since case \ref{colinear3} only occurs when we have already solved the problem. The complete definition of our generically proper circumcentering reflection operator is
\begin{equation}\label{def:circumcenteredDR}
	C_T:\HH \rightarrow \HH: \quad \vec{x}\mapsto \begin{cases}
		T_{A,B}\vec{x} & \text{if}\; \vec{x},\;R_A\vec{x},\;\text{and}\;R_BR_A\vec{x}\;\;\text{are colinear;}\\
		\CRM(\vec{x}) & \text{otherwise.}
	\end{cases}
\end{equation}
From a computational standpoint, this definition is easy to employ. One need only specify some small numerical tolerance $\epsilon >0$ and verify noncolinearity by checking that
\begin{equation*}
	\frac{\left | \langle \vec{x}-R_BR_A\vec{x},R_A\vec{x}-R_BR_A\vec{x} \rangle \right |}{\|\vec{x}-R_BR_A\vec{x}\| \|R_A\vec{x}-R_BR_A\vec{x}\|} < 1-\epsilon,
\end{equation*}
before using the closed form for the circumcenter from \cite[Theorems~8.4,8.5]{bauschke2018circumcenters}.

Since $C_T$ specifies to $\CRM$ when $\CRM(\vec{x})$ is proper---except in the uninteresting case of \ref{colinear3} when the feasibility problem is essentially solved---we immediately have the following fixed point result.

\begin{proposition}[Fixed points of $C_T$]\label{prop:fixed}
	If $\vec{x} \in \Fix C_T$ then $\mathbb{P}_A\vec{x} \cap A \cap B \neq \emptyset$.
\end{proposition}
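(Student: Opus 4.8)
The plan is to split into the two cases of the definition \eqref{def:circumcenteredDR} of $C_T$. Suppose $\vec{x} \in \Fix C_T$, i.e. $C_T\vec{x} = \vec{x}$.

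\textit{Case 1: $\vec{x},R_A\vec{x},R_BR_A\vec{x}$ are colinear.} Then by definition $C_T\vec{x} = T_{A,B}\vec{x}$, so $\vec{x} \in \Fix T_{A,B}$. The conclusion $\mathbb{P}_A\vec{x} \cap A \cap B \neq \emptyset$ is then immediate from \eqref{eqn:fixedpointsDR}.

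\textit{Case 2: $\vec{x},R_A\vec{x},R_BR_A\vec{x}$ are not colinear.} Then $C_T\vec{x} = \CRM(\vec{x}) = C(\vec{x},R_A\vec{x},R_BR_A\vec{x})$, and this circumcenter equals $\vec{x}$. By the defining property of the circumcenter, $\vec{x}$ is equidistant from the three points $\vec{x},R_A\vec{x},R_BR_A\vec{x}$; since $\|\vec{x}-\vec{x}\|=0$, this forces $\|\vec{x}-R_A\vec{x}\| = 0$ and $\|\vec{x}-R_BR_A\vec{x}\|=0$, i.e. $R_A\vec{x}=\vec{x}$ and $R_BR_A\vec{x}=\vec{x}$. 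But then the three points coincide, contradicting noncolinearity (three equal points are colinear, indeed trivially so). Hence Case 2 cannot occur when $\vec{x}$ is a fixed point. (Alternatively, one notes $R_A\vec{x}=\vec{x}$ gives $P_A\vec{x}=\vec{x}$, so $\vec{x}\in A$, and $R_BR_A\vec{x}=\vec{x}$ gives $R_B\vec{x}=\vec{x}$, so $\vec{x}\in B$; thus $\vec{x}\in A\cap B$ and $\vec{x}\in\mathbb{P}_A\vec{x}$, which also yields the conclusion directly — but the point is that this degenerate situation is already subsumed under Case 1.)

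Combining the two cases, every fixed point of $C_T$ lies in $\Fix T_{A,B}$ (Case 2 being vacuous), and the claim follows from \eqref{eqn:fixedpointsDR}. The only mild subtlety, and the step warranting a sentence of care, is the observation in Case 2 that requiring the circumcenter to coincide with one of the three generating points forces all three to be equal — turning that case vacuous rather than producing a genuinely new fixed point — so that the fixed-point set of $C_T$ is governed entirely by the classical DR fixed-point inclusion \eqref{eqn:fixedpointsDR}.
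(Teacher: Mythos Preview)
Your proof is correct and follows essentially the same two-case split as the paper: the colinear case reduces to $\Fix T_{A,B}$ and \eqref{eqn:fixedpointsDR}, while in the noncolinear case equidistance from $\vec{x}$ forces $\vec{x}=R_A\vec{x}=R_BR_A\vec{x}$. The only cosmetic difference is that the paper stops at ``$\vec{x}\in A\cap B$, so $P_A\vec{x}=\vec{x}\in A\cap B$'' rather than calling the case vacuous, which you already note as your alternative.
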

\begin{proof}
	Let $\vec{x} \in \Fix C_T$. Then we have from \eqref{def:circumcenteredDR} that either $\vec{x}= T_{A,B}\vec{x}$ or $\vec{x} = \CRM(\vec{x})$. If $\vec{x}\in T_{A,B}\vec{x}$, then the result follows from \eqref{eqn:fixedpointsDR}. If $\vec{x}\in \CRM(\vec{x})$, then $\vec{x}$ is equidistant from $\vec{x}, R_A\vec{x}$, and $R_BR_A\vec{x}$, and so $\vec{x}=R_A\vec{x} = R_B R_A \vec{x}$. Thus $\vec{x} \in A \cap B$, and so $P_A\vec{x}=\vec{x} \in A \cap B$.
\end{proof}

\begin{definition}[Generically proper CRM]
	Let $\vec{x}_0 \in \HH$. Define $(\vec{x}_n)_{n \in \N}$ by
	\begin{equation}
		\vec{x}_{n+1}:= C_T \vec{x}_n,
	\end{equation}
	where $C_T$ is as in \eqref{def:circumcenteredDR}.
\end{definition}

For nonconvex problems, DR often has colinear substeps at the beginning of the search; see, for example, the example of Section~\ref{sec:planecurveexamples} item \ref{ex:colinear} or the example of the ellipse and line from \cite{BLSSS}. However, for many problems, this co-linear case never occurs (e.g. Examples~\ref{ex:10spherehyperplane}, \ref{ex:10sphereline}, and \ref{ex:ODE} from this paper), or at least never occurs when the algorithm starts sufficiently near to a solution (e.g. the ellipse and line example in \cite{BLSSS}). For this reason, the local analysis of convergence for $C_T$ often reduces to the analysis of CRM. For our part, we do not view ``Generically proper CRM'' to be a distinct algorithm from CRM, casewise definition notwithstanding. We simply consider it to be the natural way to adapt CRM for a nonconvex feasibility problem. Unless one knows a local theoretical guarantee that the co-linear case will not occur (as we use in our convergence analysis in Section~5), one simply \textit{must}, for computational safety, include a co-linearity check and have a plan in place for handling the co-linear case.

\section{Hypersurface feasibility problems}\label{s:hypersurfaces}

For DR, Borwein and Sims considered in detail the case of a unit sphere $A$ in $\R^{\eta}$ and a line $B$ \cite{BS}. Based on experimentation with the dynamical geometry software \emph{Cinderella} \cite{cinderella2}, they hypothesized global convergence of the sequence for starting points not on the \emph{singular set} -- the line perpendicular to $B$ and passing through the center of $A$. Arag\'{o}n Artacho and Borwein later provided a conditional proof based on the piecewise study of regions \cite{AB}, and Benoist showed convergence definitively by constructing a Lyapunov function \cite{Benoist}. 

With the proof of Benoist \cite{Benoist}, the case of a 2-sphere and a line was mostly resolved, though Borwein and Sims' conjecture of chaos on the singular manifold \cite{BS} was later disproven in the seemingly different context of \cite{BDL18}, where Bauschke, Dao, and Lindstrom proved it to be aperiodic but fully describable in terms of generalized Beatty sequences. Two generalizations of the 2-sphere were considered in \cite{BLSSS}. In this setting, the singular set has nonzero measure, and the dynamical system is characterized by basins of varying periodicities. 

\begin{figure}[ht]
	\begin{center}
		\begin{tikzpicture}[scale=1]
			\node[anchor=south west,inner sep=0] (image) at (0,0) {\includegraphics[angle=0,height=.5\textwidth]{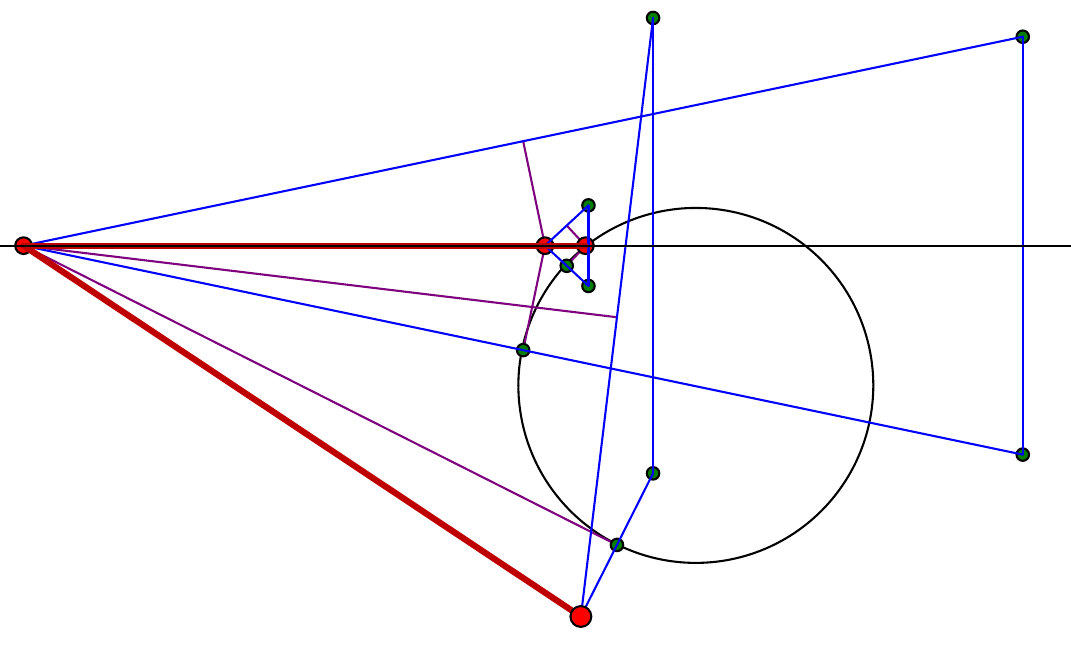}};
			\begin{scope}[x={(image.south east)},y={(image.north west)}]
				\node[black] at (0.575,0.95) {$\vec{x}_0$};
				\node[black] at (0.03,1-0.33) {$\vec{x}_1$};
				\node[black] at (0.48,1-0.33) {$\vec{x}_2$};
				\node[black] at (0.575,1-0.32) {$\vec{x}_3$};
			\end{scope}
		\end{tikzpicture}
		\caption{Several iterates of CRM for a sphere and line.}\label{fig:circle_line_circumcentered}
	\end{center}
\end{figure}

In contradistinction, as long as $\vec{x}_0$ is not in the singular set of measure zero, \emph{CRM} exhibits global convergence for any configuration of an ellipse and line, and spiralling is entirely absent. This is actually implicitly proven in \cite[see Remark~1]{behling2019convex}, because the CRM sequence, after the first update, coincides with the CRM sequence for the convex problem of the ball and a hyperplane. Figure~\ref{fig:circle_line_circumcentered} is a representative example of what the behaviour looks like, and it suggests the following result.

\begin{lemma}\label{lem:alwaysindiagonal}
	Let $\vec{x} \in \HH$. The following hold. 
	\begin{enumerate}[label=(\roman*)]
		\item\label{p41a2} If $A$ is a hyperplane and $\vec{x},R_A\vec{x},R_BR_A\vec{x}$ are not colinear, then $C_{T}\vec{x} \in A$;
		\item\label{p41a} If $B$ is a hyperplane and $\vec{x},R_A\vec{x},R_BR_A\vec{x}$ are not colinear, then $C_{T}\vec{x} \in B$;
		\item\label{p41b} If $B$ is a hyperplane and $\vec{x}=R_A\vec{x}$, then $C_{T}\vec{x} \in B$;
		\item\label{p41d} If $B$ is an affine subspace and $\vec{x} \in B$ and $\vec{x},R_A\vec{x},R_BR_A\vec{x}$ are not colinear, then $C_T \vec{x} \in B$.
	\end{enumerate}
\end{lemma}
\begin{proof}
	\ref{p41a2}: Since $\vec{x},R_A\vec{x}$, and $R_BR_A\vec{x}$ are not colinear, $C_T\vec{x} = \CRM(\vec{x})$, and so $C_T\vec{x}$ is equidistant from $R_A\vec{x}$ and $R_BR_A\vec{x}$. Since $R_A\vec{x}$ and $R_BR_A\vec{x}$ are distinct and $A$ is a hyperplane, we first claim that $A$ is the set of points equidistant from $R_A\vec{x}$ and $R_BR_A\vec{x}$. To see why, we assume without loss of generality that $\vec{0} \in A$, to simplify our notation. We can decompose $\vec{x}$ into components that are in $A$ and its orthogonal complement $A^\perp$, that is, $\vec{x}=\vec{z}_A+\vec{z}_{A^\perp}$. This implies that $P_A \vec{x}- \vec{z}_A$, and so $R_A\vec{x}= \vec{z}_A-\vec{z}_{A^\perp}$. For any point $\vec{y} \in A$, the Pythagorean theorem yields
	$$
	\|\vec{y}-\vec{x}\|^2 = \|\vec{y}- \vec{z}_A\|^2 + \|\vec{z}_{A^\perp}\|^2 = \|y - R_A\vec{x}\|^2.
	$$
	Since $\vec{y}$ is arbitrary, then any point $\vec{y}$ is equidistant from $\vec{x}$ and $R_A\vec{x}$. Suppose now that $\vec{y} \in \HH$ is equidistant from $\vec{x}$ and $R_A\vec{x}$. We may similarly write $\vec{y}=\vec{y}_A+\vec{y}_{A^\perp}$ and obtain that
	\begin{align}
		\|\vec{y}_A-\vec{z}_A\|^2 + \|\vec{y}_{A^\perp}-\vec{z}_{A^\perp}\|^2 &= \|\vec{y}_A+\vec{y}_{A^\perp}-\vec{z}_A-\vec{z}_{A^\perp}\|^2 = \|\vec{y}-\vec{x}\|^2 \stackrel{(\star)}{=} \|\vec{y}-R_A\vec{x}\|^2\nonumber \\ 
		&= \|\vec{y}_A+\vec{y}_{A^\perp}-\vec{z}_A+\vec{z}_{A^\perp}\|^2 = \|\vec{y}_A - \vec{z}_A\|^2 + \|\vec{y}_{A^{\perp}}+\vec{z}_{A^\perp}\|^2, \nonumber\\
		\text{and\;so} \quad \|\vec{y}_{A^\perp}-\vec{z}_{A^\perp}\|^2 &= \|\vec{y}_{A^\perp}+\vec{z}_{A^\perp}\|^2.\label{e:perpA}
	\end{align}
	Here $(\star)$ uses the equidistance assumption. Notice that \eqref{e:perpA} forces $\langle \vec{z}_{A^\perp},\vec{y}_{A^\perp}\rangle =0$, and since $A^\perp$ is one-dimensional, then $\vec{z}_{A^\perp}=\vec{0}$ or $\vec{y}_{A^\perp}=\vec{0}$. If $\vec{z}_{A^\perp}=\vec{0}$, then $\vec{x} = R_A\vec{x}$, which is a contradiction. Therefore $\vec{y}_{A^\perp}=\vec{0}$, and so $\vec{y} \in A$. Altogether, the set of equidistant points is exactly $A$, and so $C_T\vec{x} \in A$.
	
	\ref{p41a}: Since $\vec{x},R_A\vec{x}$, and $R_BR_A\vec{x}$ are not colinear, $C_T\vec{x} = \CRM(\vec{x})$, and so $C_T\vec{x}$ is equidistant from $R_A\vec{x}$ and $R_BR_A\vec{x}$. Since $R_A\vec{x}$ and $R_BR_A\vec{x}$ are distinct and $B$ is a hyperplane, we first claim that $B$ is the set of points equidistant from $R_A\vec{x}$ and $R_BR_A\vec{x}$. To see why, we assume without loss of generality that $\vec{0} \in B$, to simplify our notation. We can decompose $R_A\vec{x}$ into components that are in $B$ and in its orthogonal complement $B^{\perp}$, that is, $R_A\vec{x} = \vec{z}_B + \vec{z}_{B^\perp}$. This implies that $P_BR_A\vec{x}=\vec{z}_B$, and so $R_BR_A\vec{x} = \vec{z}_B - \vec{z}_{B^\perp}$ and $P_BR_BR_A\vec{x} = \vec{z}_B$. For any point $\vec{y} \in B$, the Pythagorean theorem yields
	\begin{equation*}
		\|\vec{y}-R_A\vec{x}\|^2 = \|\vec{y}-\vec{z}_B\|^2 + \|\vec{z}_{B^\perp}\|^2 = \|\vec{y}-R_BR_A\vec{x}\|^2.
	\end{equation*}
	Since $\vec{y}$ is arbitrary, then any point $\vec{y} \in B$ is equidistant from $R_A\vec{x}$ and $R_BR_A\vec{x}$. Suppose now that $\vec{y} \in \HH$ is equidistant from $R_A\vec{x}$ and $R_B R_A\vec{x}$. We may similarly write $\vec{y} = \vec{y}_B + \vec{y}_{B^\perp}$ and obtain that
	\begin{align}
		\|\vec{y}_B-\vec{z}_B\|^2 + \|\vec{y}_{B^\perp}-\vec{z}_{B^\perp}\|^2 &= \|\vec{y}_B+\vec{y}_{B^\perp}-\vec{z}_B-\vec{z}_{B^\perp}\|^2 = \|\vec{y}-R_A\vec{x}\|^2 \stackrel{(\star)}{=} \|\vec{y}-R_BR_A\vec{x}\|^2\nonumber \\ 
		&= \|\vec{y}_B+\vec{y}_{B^\perp}-\vec{z}_B+\vec{z}_{B^\perp}\|^2 = \|\vec{y}_B - \vec{z}_B\|^2 + \|\vec{y}_{B^{\perp}}+\vec{z}_{B^\perp}\|^2, \nonumber\\
		\text{and\;so} \quad \|\vec{y}_{B^\perp}-\vec{z}_{B^\perp}\|^2 &= \|\vec{y}_{B^\perp}+\vec{z}_{B^\perp}\|^2.\label{e:perp}
	\end{align}
	Here $(\star)$ uses the equidistance assumption. Notice that \eqref{e:perp} forces $\langle \vec{z}_{B^\perp},\vec{y}_{B^\perp}\rangle = 0$, and since $B^\perp$ is one-dimensional then $\vec{z}_{B^\perp}=\vec{0}$ or $\vec{y}_{B^\perp}=\vec{0}$. If $\vec{z}_{B^\perp}=\vec{0}$, then $R_A\vec{x} = R_BR_A\vec{x}$, which is a contradiction. Therefore $\vec{y}_{B^\perp}=\vec{0}$, and so $\vec{y} \in B$. Altogether, the set of equidistant points is $B$, and so $C_T\vec{x} \in B$.

	\ref{p41b}: Since $\vec{x}=R_A\vec{x}$, we have $C_{T}\vec{x} = T_{A,B}\vec{x}$ which expands to
	$$
	C_{T}\vec{x}=(1/2)\vec{x}+(1/2)R_B R_A\vec{x} = (1/2)\vec{x}+(1/2)R_B \vec{x} = P_B\vec{x},
	$$
	where the second equality uses the condition that $\vec{x}=R_A\vec{x}$. Thus, $C_{T}\vec{x} \in B$.
	
	\ref{p41d}: Let $L$ be the set of points that are equidistant from $R_A\vec{x}$ and $R_BR_A\vec{x}$. To show that $C_T(\vec{x}) \in B,$ it suffices to show that 
	$$
	L \cap {\rm aff}(\vec{x},R_A\vec{x},R_BR_A\vec{x}) \subset B.
	$$
	Because $\vec{x},R_A\vec{x}, R_BR_A\vec{x}$ are not colinear, it must be true that $R_A\vec{x} \neq R_BR_A\vec{x}$, and so the inclusion
	$$
	L \cap {\rm aff}(\vec{x},R_A\vec{x},R_BR_A\vec{x}) \subsetneq {\rm aff}(\vec{x},R_A\vec{x},R_BR_A\vec{x})
	$$
	is not an equality. Combining with the fact that the affine subspace on the right side is 2-dimensional, we have
	$$
	{\rm dim}(L \cap {\rm aff}(\vec{x},R_A\vec{x},R_BR_A\vec{x}))=1.
	$$
	Notice that in our proof of \ref{p41a}, the argument that shows that $B$ is contained in the subset of equidistant points (i.e. $B \subset L$) depended only upon $B$ being an affine subspace (only the reverse inclusion $\supset$ relied upon $B$ being a hyperplane) and so $B \subset L$ holds now by the same argument. More specifically, since $\vec{x} \in B$ and $P_BR_A\vec{x} \in B$, we have that
	$$
	B^*:={\rm aff}\{\vec{x},P_BR_A\vec{x}\} \subset B \subset L.
	$$
	Of course, we also have
	$$
	B^* \subset {\rm aff}\{\vec{x},R_A\vec{x},R_BR_A\vec{x} \}.
	$$
	Moreover, since $\vec{x},R_A\vec{x}$, and $R_BR_A\vec{x}$ are not colinear $B^*$ is of dimension 1. Altogether, we have that $B^*$ is a one-dimensional affine subspace contained in $L \cap {\rm aff}(\vec{x},R_A\vec{x},R_BR_A\vec{x})$. As the two affine subspaces are of dimension 1, and one is contained in the other, they must be equal. This concludes the result.
\end{proof}

Of the above proposition, the following should be noted:
\begin{enumerate}
    \item Items \ref{p41a2} and \ref{p41a} of the above proposition are not necessarily true if the set that is assumed to be a hyperplane is, instead, only assumed to be an affine subspace of arbitrary dimension. In case \ref{p41a}, for instance, if one assumes $B$ is only an affine subspace instead of a hyperplane, then one can only obtain $\vec{y}_{B^\perp}\in\{\vec{z}_{B^\perp} \}^\perp$. As an example, the perpendicular bisector of a reflection across a line in $\R^3$ is a hyperplane containing the line; the circumcenter will be in the hyperplane, but not necessarily the line.
    \item Item \ref{p41d} is already known in the case when $A$ is convex, due to \cite[Lemma 3]{behling2019convex}. That result uses in its proof the convexity of $A$; if it did not, then we would simply cite the extension as vacuous.
\end{enumerate}

Throughout the remaining part of this section, we write $\vec{x}=(x,x') \in \R^{\eta} \times \R=\R^{\eta+1}$ where $x$ is the component of $\vec{x}$ in $\R^{\eta}$ and $x'$ is the component of $\vec{x}$ in $\R$. Furthermore, we set $A,B \subset \R^{\eta+1}$ as
$$
A = {\rm gra}f = \{(y,f(y))\;|\;y \in {\rm dom} f \subset \R^{\eta} \}\quad \text{and} \quad B = \R^{\eta} \times \{0\}.
$$
Here, ${\rm gra } f$ denotes the \emph{graph} of a function $f:\R^{\eta} \to \R$. Furthermore, we use $\partial^0 f$ to denote the symmetric subdifferential of $f$, which is given by $\partial^0f:=\partial f \cup (-\partial (-f))$, where $\partial f(x):=\{x^* \in X\;|\; (x^*,1) \in N_{{\rm epi}f}(x,f(x)) \}$ is the \textit{limiting subdifferential} of $f$ at $x$ \cite[Section~2.3]{DT}. The following lemma establishes a relationship between CRM and subgradient projections in $\R^{\eta}$.

\begin{theorem}\label{thm:subgradientdescent}
	Let $\vec{x}=(x,x') \in \R^{\eta} \times \R$, $B=\{(t,0) : t\in \R^{\eta}\}$ and $A={\rm gra }f$ where $f:\R^{\eta} \rightarrow \R$ is proper and has a closed graph. Suppose further that $f$ is Lipschitz continuous locally at $y$ where $(y,f(y))=:P_A\vec{x}$. Then the following hold.
	\begin{enumerate}[label=(\roman*)]
		\item\label{case:notcolinear} If $\vec{x},R_A\vec{x},R_BR_A\vec{x}$ are not colinear, then
		\begin{align*}
			C_T\vec{x} &= \left(y-\frac{f(y)}{\|y^*\|^2}y^*,0 \right)\\
			\text{where}\quad & y^* \in \partial^0(f(y)) \;\; \text{satisfies}\;\; x=y+(f(y)-x')y^*
		\end{align*}
		\item\label{case:colinear} Otherwise, $C_T\vec{x} = \left(y,x' - f(y) \right)$.
	\end{enumerate}
\end{theorem}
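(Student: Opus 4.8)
The plan is to write everything in coordinates, compute the three reflection points once and for all, and then handle the two branches of the definition \eqref{def:circumcenteredDR} in turn. Set $(y,f(y)):=P_A\vec{x}$. Then $R_A\vec{x}=2P_A\vec{x}-\vec{x}=(2y-x,\,2f(y)-x')$, and since the projection onto the hyperplane $B=\R^{\eta}\times\{0\}$ is the single-valued map $(u,u')\mapsto(u,0)$, one gets $R_BR_A\vec{x}=(2y-x,\,x'-2f(y))$. Case \ref{case:colinear} then falls out with no reference to subdifferentials: when $\vec{x},R_A\vec{x},R_BR_A\vec{x}$ are colinear, \eqref{def:circumcenteredDR} together with $T_{A,B}=\tfrac12 R_BR_A+\tfrac12\Id$ gives $C_T\vec{x}=T_{A,B}\vec{x}=\tfrac12\vec{x}+\tfrac12 R_BR_A\vec{x}=(y,\,x'-f(y))$, which is the claimed formula.

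For case \ref{case:notcolinear}, the first task is to extract the optimality condition behind $y$. By definition of $P_A$, the point $y$ minimizes $g(u):=\tfrac12\|x-u\|^2+\tfrac12(x'-f(u))^2$ over ${\rm dom}\,f$, and since $f$ is Lipschitz continuous locally at $y$, so is $g$; applying Fermat's rule and the sum and scalar-multiple/chain rules of nonsmooth calculus then yields some $y^*\in\partial^0 f(y)$ with $x-y=(f(y)-x')y^*$, the symmetric subdifferential entering precisely because the scalar $f(y)-x'$ that multiplies the inner chain factor can have either sign (cf. \cite[Section~2.3]{DT}). The second task is to note that non-colinearity excludes three degenerate configurations: if $f(y)=x'$ then $x=y$, so $\vec{x}=P_A\vec{x}=R_A\vec{x}$; if $y^*=\vec{0}$ then again $x=y$, so all three reflection points lie on the line $\{y\}\times\R$; and if $x'=2f(y)$ then $R_A\vec{x}=R_BR_A\vec{x}$. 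Each of these makes the three points colinear, so in case \ref{case:notcolinear} we may proceed with $y^*\neq\vec{0}$, $f(y)\neq x'$ and $x'\neq 2f(y)$.

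The remaining step is to identify $\CRM(\vec{x})$, which is characterised as the unique point lying in the affine hull of $\vec{x},R_A\vec{x},R_BR_A\vec{x}$ and equidistant from the three. By Lemma~\ref{lem:alwaysindiagonal}\ref{p41a}, $C_T\vec{x}=\CRM(\vec{x})\in B$, so $C_T\vec{x}=(c,0)$ with $c\in\R^{\eta}$, and since $B$ is the perpendicular bisector of $R_A\vec{x}$ and $R_BR_A\vec{x}$ (shown in the proof of Lemma~\ref{lem:alwaysindiagonal}), this point is automatically equidistant from those two. The perpendicular bisector of $\vec{x}$ and $R_A\vec{x}$ is the hyperplane through $P_A\vec{x}$ with normal $P_A\vec{x}-\vec{x}$, which (using $x-y=(f(y)-x')y^*$ and $f(y)\neq x'$) is proportional to $(-y^*,1)$; requiring $(c,0)$ to lie on it gives the scalar equation $\langle c-y,\,y^*\rangle=-f(y)$. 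On the other hand, computing $R_A\vec{x}-\vec{x}$ and $R_BR_A\vec{x}-\vec{x}$ and simplifying with $x-y=(f(y)-x')y^*$ shows the affine hull has direction space $\operatorname{span}\{(\vec{0},1),(y^*,\vec{0})\}$ — the three conditions $y^*\neq\vec{0}$, $f(y)\neq x'$, $x'\neq 2f(y)$ being exactly what is needed to produce both generators — so the affine hull meets $B$ in the line $\{(x+ty^*,0):t\in\R\}$, forcing $c=x+ty^*$ for some $t\in\R$. Substituting into $\langle c-y,y^*\rangle=-f(y)$ and using $\langle x-y,y^*\rangle=(f(y)-x')\|y^*\|^2$ gives $t=-\tfrac{f(y)}{\|y^*\|^2}-(f(y)-x')$, and hence $c=x-(f(y)-x')y^*-\tfrac{f(y)}{\|y^*\|^2}y^*=y-\tfrac{f(y)}{\|y^*\|^2}y^*$, as claimed.

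I expect the main obstacle to live in the second paragraph: getting the nonsmooth sum and chain rules to interact so that the optimality condition delivers a selection from the \emph{symmetric} subdifferential $\partial^0 f$ (rather than the regular or limiting one), and then verifying that the three excluded configurations match colinearity of $\vec{x},R_A\vec{x},R_BR_A\vec{x}$ exactly. Everything after that is routine linear algebra in $\R^{\eta+1}$, so the computation of the circumcenter itself should present no real difficulty.
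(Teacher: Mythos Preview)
Your proposal is correct and follows essentially the same route as the paper: compute the reflection substeps explicitly, dispatch the colinear case via $T_{A,B}$, and in the non-colinear case use the optimality condition $x=y+(f(y)-x')y^*$ together with Lemma~\ref{lem:alwaysindiagonal} to locate $\CRM(\vec{x})$ as the unique point in the intersection of $B$, the perpendicular bisector of $(\vec{x},R_A\vec{x})$, and the affine hull of the three substeps. The only noteworthy difference is that the paper obtains the existence of $y^*\in\partial^0 f(y)$ by direct citation of \cite[Lemma~3.4]{DT} rather than by sketching the Fermat-rule argument you outline, and it does not separately record the exclusion $x'\neq 2f(y)$ (which is implicit in non-colinearity); otherwise the computations and their order are the same.
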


\begin{proof}
	First note that the existence of $y^*$ that satisfies 
	\begin{equation}\label{eqn:handy}
		x=y+(f(y)-x')y^*
	\end{equation}
	is assured by \cite[Lemma~3.4]{DT}.
	
	\ref{case:notcolinear}: The assumption of non-colinearity forces $C_T\vec{x}=\CRM(\vec{x})$. We must have $y^* \neq 0$, since $(y^*=0) \implies (x=y)$, which forces $\vec{x},R_A\vec{x},R_BR_A\vec{x}$ to be colinear, a contradiction. Similarly note also that $(f(y)-x'=0)\implies (x=y)$, which forces the same contradiction; therefore $f(y)-x' \neq 0$. Any point $\vec{w}=(w,w')$ in the perpendicular bisector of $(\vec{x},R_A\vec{x})$ satisfies
	\begin{align*}
		0&=\left \langle \vec{x}-P_A\vec{x},(w,w')-P_A\vec{x} \right \rangle\\
		&=\left \langle (x-y,x' -f(y)),(w-y,w'-f(y)) \right \rangle\\
		&= \left \langle \left((f(y)-x')y^*,x'-f(y) \right), (w-y,w'-f(y)) \right \rangle\quad \text{(using\;\eqref{eqn:handy})}\\
		&=(f(y)-x')\left \langle \left((y^*,-1 \right), (w-y,w'-f(y)) \right \rangle\\
		&\stackrel{(a)}{=}(f(y)-x') \left(\langle (w-y),y^*\rangle -w' + f(y)) \right)\\
		\iff (w,w') &\stackrel{(b)}{=} (y+\lambda,f(y)+\langle \lambda, y^* \rangle) \;\;\text{for\; some}\;\; \lambda \in \R^{\eta}.
	\end{align*}
	To see why (b) holds, remember that $f(y)-x' \neq 0$, and so the equality (a) is equivalent to $\left(\langle (w-y),y^*\rangle -w' + f(y) \right)=0$, which is equivalent to $w'=f(y)+\langle w-y,y^*\rangle$. Defining $\lambda:=w-y$, (b) and (a) are equivalent.
	
	Altogether, we have shown that the perpendicular bisector of the segment $(\vec{x},R_A\vec{x})$ is the tangent plane $(y+\lambda,f(y)+\langle \lambda, y^*\rangle )_{\lambda \in \R^{\eta}}=:H$. Combining with Lemma~\ref{lem:alwaysindiagonal} and the definition of CRM, we have that $\CRM(\vec{x})\in H \cap B \cap {\rm aff}(\vec{x},R_A\vec{x},R_B R_A\vec{x})$. We also have that
	\begin{equation}\label{e:affineequality}
		{\rm aff}(\vec{x},R_A\vec{x},R_B R_A\vec{x}) = \{(y+\alpha y^*,\beta)\;\;|\;\; (\alpha,\beta) \in \R^2\}.
	\end{equation}
	To see why \eqref{e:affineequality} holds, let $\vec{v} \in {\rm aff}(\vec{x},R_A\vec{x},R_B R_A\vec{x})$. Then 
	\begin{subequations}
	\begin{align}
	\vec{v}=& \alpha_{\vec{x}} \vec{x} + \alpha_{R_A\vec{x}} R_A\vec{x} + \alpha_{R_BR_A\vec{x}} R_BR_A\vec{x} \quad \text{with}\quad \alpha_{\vec{x}}+\alpha_{R_A\vec{x}}+\alpha_{R_BR_A\vec{x}}=1 \label{useconditions1}\\
	=&\alpha_{\vec{x}} (x,x') + \alpha_{R_A\vec{x}} (2y-x,2f(y)-x') + \alpha_{R_BR_A\vec{x}} (2y-x,-2f(y)+x')\label{useconditions2}\\
	=&\left(y+\alpha^*y^*,\beta^* \right)\label{useconditions3}\\
	\text{where} \quad \alpha^*:=&(2\alpha_{\vec{x}}-1)(f(y)-x')\label{useconditions4}\\
	\text{and}\quad \beta^*:=&\alpha_{\vec{x}}x'+(2f(y)-x')(\alpha_{R_A\vec{x}}-\alpha_{R_B R_A\vec{x}}).\label{useconditions5}
	\end{align}
	\end{subequations}
	Here \eqref{useconditions1} simply uses the definition of the affine hull; \eqref{useconditions2} uses the definition of $B$, whereby $R_BR_A\vec{x}$ is obtained by taking $R_A\vec{x}$ and simply reversing the sign on its $(\eta+1)$th coordinate; \eqref{useconditions3} uses the fact that $x=y+(f(y)-x')y^*$ and the equality $\alpha_{\vec{x}}+\alpha_{R_A\vec{x}}+\alpha_{R_BR_A\vec{x}}=1$. Altogether, we have shown $\vec{v}=(y+\alpha^*,\beta^*) \in \{(y+\alpha y^*,\beta)\;\;|\;\; (\alpha,\beta) \in \R^2\}$, showing the inclusion $\subset$ in \eqref{e:affineequality}. Moreover, as both affine subspaces in \eqref{e:affineequality} are of the same dimension and one is contained in the other, they must be equal, and so the equality holds in \eqref{e:affineequality}.
	
	Altogether, we have
	\begin{align*}
		{\rm aff}(\vec{x},R_A\vec{x},R_B R_A\vec{x}) \cap H \cap B &=\left\{\underbrace{(y+\lambda,f(y)+\langle \lambda,y^* \rangle)}_{\text{constraint $H$}}\;|\; \underbrace{\lambda = \alpha y^*}_{\text{constraint}\; {\rm aff(\dots)}}, \underbrace{f(y)+\langle \lambda,y^*\rangle =0}_{\text{constraint $B$}} \right\}\\
		&=\left \{\left(y-\frac{f(y)}{\|y^*\|^2}y^*,0\right)\right \}.
	\end{align*}
	This shows the desired result.
	
	\ref{case:colinear}: In the colinear case, $C_T\vec{x}=T_{A,B}\vec{x}$. Computing, one has $R_A\vec{x} = (2y-x,2f(y)-x')$, and $R_BR_A\vec{x} = (2y-x,-2f(y)+x')$, and finally $(1/2)R_B R_A\vec{x}+(1/2)\vec{x} = (y,x'-f(y))$, which shows the result.
\end{proof}

\begin{remark}[A subgradient projections characterization]\label{rem:subgradientprojections}
	Notice that  $y \mapsto y-\frac{f(y)}{\|y^*\|^2}y^*$ is a step of subgradient projections applied to the function $f$ at $y$. For a differentiable function  $f:\R \to \R$, it reduces to a step of Newton--Raphson method: $y \mapsto y-f(y)/f'(y)$ where $y \in \R$. Thus, Theorem~\ref{thm:subgradientdescent} shows that, for problems when $C_T\vec{x}=\CRM(\vec{x})$ and $\vec{x} \in \R^2$, the method reduces to a step of alternating projections $P_B P_A \vec{x}$, followed by a step of subgradient descent for the function $f$. Figure~\ref{fig:sqrtX} illustrates this in the case when $A$ is the graph of the function $x \mapsto x/\sqrt{|x|} \; (x\in\R)$ and $B$ is the horizontal axis. Solving the feasibility problem amounts to finding the root of the function $x \mapsto x/\sqrt{|x|} \; (x\in\R)$, a problem that Newton's method on $\R$ fails to solve.
\end{remark}

Interestingly, \cite[Proposition~3.6]{lindstrom2020computable} has described a relationship between CRM and the method of subgradient projections applied for a different function: one that describes the dynamical system admitted by repeated application of the DR operator. This function, called a Lyapunov function, is defined on a lifted space. The author used the theory of Lyapunov functions to motivate their introduction of different algorithm, $L_T$, which we will include for comparison in our computed examples.

The use of DR to find roots of functions on $\R$, as well as for $x/\sqrt{|x|} \; (x\in\R)$ in particular, was first considered in \cite{LSS}. Dao and Tam adapted Benoist's Lyapunov function approach in order to demonstrate local convergence for non-tangentially intersecting cases thereof \cite{DT}.

\begin{figure}
	\begin{center}
		\begin{tikzpicture}[scale=1]
			\node[anchor=south west,inner sep=0] (image) at (0,0) {\includegraphics[width=.75\textwidth]{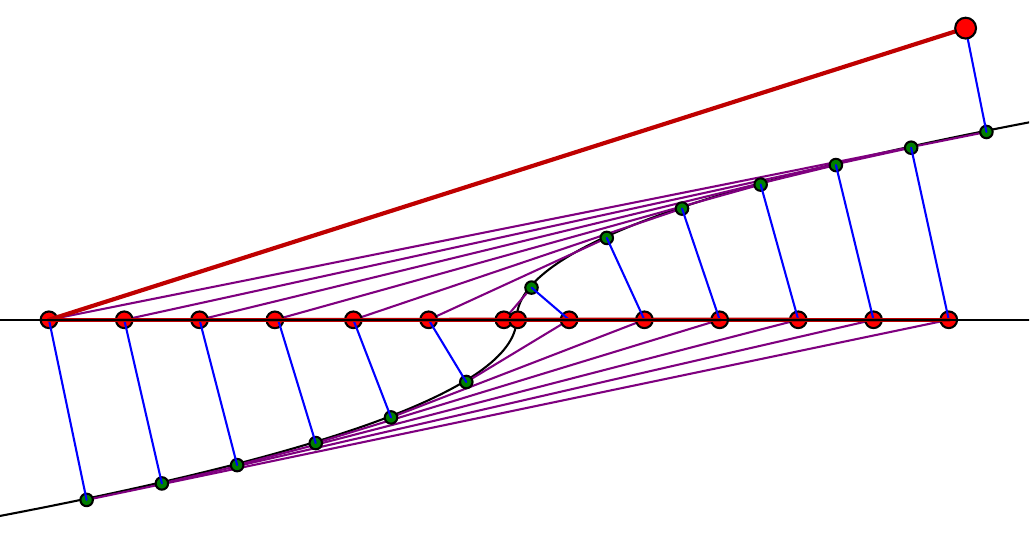}};
			\begin{scope}[x={(image.south east)},y={(image.north west)}]
				
				\node[black] at (0.95,1) {$\vec{x}_0$};
				\node[black] at (0.04,0.35) {$\vec{x}_1$};
				\node[black] at (0.11,0.35) {$\vec{x}_3$};
				\node[black] at (0.18,0.35) {$\vec{x}_5$};
				\node[black] at (0.25,0.35) {$\vec{x}_7$};
				\node[black] at (0.32,0.35) {$\vec{x}_9$};
				\node[black] at (0.40,0.35) {$\vec{x}_{11}$};
				\node[black] at (0.57,0.46) {$\vec{x}_{12}$};
				\node[black] at (0.65,0.46) {$\vec{x}_{10}$};
				\node[black] at (0.73,0.46) {$\vec{x}_{8}$};
				\node[black] at (0.81,0.46) {$\vec{x}_{6}$};
				\node[black] at (0.89,0.46) {$\vec{x}_{4}$};
				\node[black] at (0.96,0.46) {$\vec{x}_{2}$};
			\end{scope}
		\end{tikzpicture}
	\end{center}
	\caption{CRM applied to find a root of $x\mapsto x/\sqrt{|x|} \mbox{ where } (x\in\R)$}\label{fig:sqrtX}
\end{figure}

\section{A line and a plane curve}\label{s:planecurves}

Most local convergence results in the nonconvex setting of hypersurfaces have focused on hyperplanes and graphs of functions \cite{AB,Benoist,BLSSS,BS,DT,LSS} with $\R^2$ as a setting of particular focus. One motivation is that the \textit{phase retrieval problem} may be thought of as a feasibility problem on $n$-tuples of $\R^2$. Another reason is that DR has variously been observed to spiral in lower dimensional subspaces, a phenomenon theorized to often occur in a lower dimensional affine subspace of $\R^{\eta}$. For example, Arag\'on Artacho has created an image that shows this behaviour for a line and sphere in $\R^3$ \cite[slide 33]{Borweinlectures}. For CRM, $\R^2$ is especially a natural context for investigation because the circumcenter construction for $\vec{x}$ is performed in the affine hull of $\vec{x},R_A\vec{x},$ and $R_BR_A\vec{x}$, which in the non-colinear case is always a copy of $\R^2$. Thus, sets in $\R^2$ generically represent slices of sets in the affine hull of reflection substeps for higher dimensional problems. Similar planar results motivated, in part, the introduction of the first primal/dual centering method in \cite{lindstrom2020computable}.

Under mild assumptions, we will show local convergence in $\R^2$ for the algorithm generated by iteratively applying the map $C_T$ with a curve $A$ and line $B$. Our approach differs from those in previous works both in terms of the methods used and the results obtained. In particular, the use of the generalized angle bisector theorem suggests a path forward for more complicated problems.

\subsubsection*{Comparing approaches}

The first theoretical result on the convergence of DR in the nonconvex setting is that of Borwein and Sims \cite{BS} who used Perron theorem \cite[Theorem~6.1]{BS} \cite[Corollary~4.7.2]{Lak&DT} on the stability of almost linear difference equations to show local convergence in the setting where $A$ is a unit sphere in $\R^{\eta}$ and $B$ is a line. This approach has since been adapted \cite{BLSSS} to show local convergence for plane curves more generally. The strategy relies upon the fact that $T_{A,B}\vec{x}$ may be described as a continuous function of the 3-tuple $(\vec{x},R_A\vec{x},R_B R_A \vec{x})$. This continuity does not extend to the case of the operator $C_{T}$, and so the same approach does not immediately extend to this new context.

The approach of Benoist, Dao, and Tam relies on a Lyapunov function \cite{Benoist,DT,lindstrom2020computable}. It is less clear how to adapt such an approach when the spiral itself is obviated by circumcentering the method.

Our approach is to use Theorem~\ref{thm:subgradientdescent} and employ trigonometry to show results about the intersections of the tangents taken for the curve $A$ with the line $B$. Without loss of generality, we let $L$ be the horizontal axis. We then use the observation in Remark~\ref{rem:subgradientprojections} that, under mild assumptions, $C_{T}$ locally behaves like a step of alternating projections for $A$ and $B$ followed by a step of Newton--Raphson method employed to find a root of the function whose graph is the curve $A$. Figure~\ref{fig:convergence} is helpful in understanding this observation. Here, when $B$ is the horizontal axis and $A$ is the graph of a function $f:\R \to \R$, we may associate the points $\vec{x}_n,\vec{x}_{n+1},$ and $(y_{n},f(y_n))$ in $\R^2 = \{\vec{t}=(t,t')\;|\; t,t' \in \R \}$ with their horizontal components $x_n,x_{n+1},$ and $y_{n}$. We then have the relationship
\begin{equation}\label{d:NewtonRaphson}
	x_{n+1} = g_f(y_n) := y_n-\frac{f(y_n)}{f'(y_n)}, \quad \text{where}\;\; (y_n,0)=P_B P_A(\vec{x}_n).
\end{equation}
Here, $g_f$ is the Newton--Raphson operator that is commonly used to search for a root of the function $f$, and for which convergence results are well known. For the sake of cleanliness in Figure~\ref{fig:convergence}, we abuse notation slightly by assigning the label $g_f(x_n)$ to the point that is actually $(g_f(x_n),0)$. Note that when $f$ is strictly differentiable at $y \in \R$, $\partial^0 f(y) = f'(y)$ \cite[Corollary~1.82]{mordukhovich2006variational}, an identity we will implicitly use in this section whenever appealing to Theorem~\ref{thm:subgradientdescent}.

\subsection{Auxiliary results on Newton--Raphson method}

Because our results will exploit the connection with Newton--Raphson method, we will need two preliminary lemmas that tailor the classical theory to our purpose.

\begin{lemma}[When Newton--Raphson rate is quadratic]\label{lem:NR}
	Let $\theta:\R \rightarrow \R$ and $g(t) =t-\theta(t)/\theta'(t)$ for $t \in \left[-\bepsilon,\bepsilon \right]$ and $\theta(0) = 0$. Suppose there exist $h_{\rm LEFT},h_{\rm RIGHT} \in C^2[-\bepsilon,\bepsilon]$
	such that
	\begin{align*}
		(\forall t \in \left[0,\bepsilon\right])\quad h_{\rm RIGHT}(t) = \theta(t)\quad \text{and}\quad (\forall t \in \left[-\bepsilon,0\right])\quad h_{\rm LEFT}(t) = \theta(t),
	\end{align*}
	with $h'_{\rm LEFT}(0) , h'_{\rm RIGHT}(0) \in \R \setminus \{0\}$. Then there exists $\delta>0$ such that for $p_0 \in \left[-\delta,\delta \right]$, the sequence defined by $p_n = g(p_{n-1})$, when $n\geq 1$, converges at least quadratically to $0$. Moreover, for $t$ sufficiently near $0$,
	$$
	|g(t)| < \frac{M}{2}|t|^2,
	$$
	for some $M \in \R$.
\end{lemma}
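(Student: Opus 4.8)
The plan is to reduce the claim to the textbook quadratic-convergence estimate for Newton--Raphson applied \emph{separately} to the two $C^2$ functions $h_{\rm LEFT}$ and $h_{\rm RIGHT}$, and then to patch the two one-sided estimates together over a single neighbourhood of $0$. First I would observe that evaluating the hypothesis ``$h_{\rm RIGHT}(t)=\theta(t)$ for $t\in[0,\bepsilon]$'' at $t=0$ gives $h_{\rm RIGHT}(0)=\theta(0)=0$, and likewise $h_{\rm LEFT}(0)=0$; together with $h'_{\rm RIGHT}(0),h'_{\rm LEFT}(0)\in\R\setminus\{0\}$ this says that $0$ is a \emph{simple} root of each of the two $C^2$ functions. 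By continuity of $h'_{\rm LEFT}$ and $h'_{\rm RIGHT}$, I can fix $r\in(0,\bepsilon]$ on which both derivatives stay bounded away from $0$; then on $[0,r]$ the map $g$ coincides with the Newton operator $g_{\rm RIGHT}(t):=t-h_{\rm RIGHT}(t)/h'_{\rm RIGHT}(t)$ of $h_{\rm RIGHT}$, and on $[-r,0]$ it coincides with the Newton operator $g_{\rm LEFT}$ of $h_{\rm LEFT}$ (this shrink from $\bepsilon$ to $r$ is exactly what guarantees $g$ is well defined there).

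Next I would run the Taylor argument on each branch. Expanding $h_{\rm RIGHT}$ about $t$ and using $h_{\rm RIGHT}(0)=0$, there is $\xi$ between $0$ and $t$ with $0=h_{\rm RIGHT}(t)-h'_{\rm RIGHT}(t)\,t+\tfrac12 h''_{\rm RIGHT}(\xi)\,t^2$, which rearranges to $g_{\rm RIGHT}(t)=\tfrac{h''_{\rm RIGHT}(\xi)}{2h'_{\rm RIGHT}(t)}\,t^2$ and hence $|g_{\rm RIGHT}(t)|\le \tfrac{M_{\rm R}}{2}\,t^2$ for $t\in[0,r]$, where $M_{\rm R}:=\sup_{[-r,r]}|h''_{\rm RIGHT}|\,/\,\inf_{[-r,r]}|h'_{\rm RIGHT}|$; symmetrically $|g_{\rm LEFT}(t)|\le \tfrac{M_{\rm L}}{2}\,t^2$ on $[-r,0]$. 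Setting $M:=1+\max\{M_{\rm R},M_{\rm L}\}$ then yields the asserted bound $|g(t)|<\tfrac{M}{2}|t|^2$ for every $t\in[-r,r]$ with $t\neq0$ (and $0$ is a fixed point of $g$, so there is nothing to check there).

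Finally I would take $\delta:=\min\{r,1/M\}$ and argue by induction that the orbit cannot leave $[-\delta,\delta]$ and in fact contracts: if $|p_{n-1}|\le\delta$, then $p_{n-1}$ lies in $[0,r]$ or in $[-r,0]$, the relevant one-sided estimate applies, and $|p_n|=|g(p_{n-1})|\le\tfrac{M}{2}|p_{n-1}|^2\le\tfrac{M\delta}{2}|p_{n-1}|\le\tfrac12|p_{n-1}|\le\delta$. Hence $|p_n|\le 2^{-n}\delta\to0$, and the inequality $|p_n|\le\tfrac{M}{2}|p_{n-1}|^2$ is precisely (at least) quadratic convergence; the ``moreover'' clause is the bound established in the previous paragraph. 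The only delicate point --- and the nearest thing to an obstacle --- is bookkeeping the possible sign changes of the iterates, since a single step may send a positive $p_{n-1}$ to a negative $p_n$ and thereby switch which branch ($g_{\rm RIGHT}$ or $g_{\rm LEFT}$) governs the next step. Choosing one uniform constant $M$ and one neighbourhood radius $\delta$ valid for both branches is exactly what makes the induction go through irrespective of the sign pattern of the orbit.
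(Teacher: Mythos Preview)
Your proposal is correct and follows essentially the same approach as the paper: apply the classical quadratic Newton--Raphson estimate separately to the two $C^2$ branches $h_{\rm LEFT}$ and $h_{\rm RIGHT}$, then take the worse of the two constants and the smaller of the two radii. If anything, your write-up is more complete: you carry out the Taylor expansion explicitly rather than citing a textbook theorem, and you close the induction showing the orbit cannot escape $[-\delta,\delta]$ despite possible sign changes, whereas the paper stops at the pointwise bound $|g(t)|<\tfrac{M_{\rm MAX}}{2}|t|^2$ and leaves the iteration argument implicit.
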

\begin{proof}
	By the classical result on Newton--Raphson (see \cite[Theorem~2.9]{BF16})
	, there exist $\delta_{\rm LEFT}$, $\delta_{\rm RIGHT}$, $M_{\rm LEFT}$, $M_{\rm RIGHT}$ such that
	\begin{align*}
		t \in \left[-\delta_{\rm LEFT},\delta_{\rm LEFT} \right] \quad &\implies \quad |g_{\rm LEFT}(t)| < \frac{M_{\rm LEFT}}{2}|t|^2\\
		\text{and}\;\;t \in \left[-\delta_{\rm RIGHT},\delta_{\rm RIGHT} \right] \quad &\implies \quad |g_{\rm RIGHT}(t)| < \frac{M_{\rm RIGHT}}{2}|t|^2\\
		\text{where}\quad g_{\rm RIGHT}:t &\rightarrow t-h_{\rm RIGHT}(t)/h'_{\rm RIGHT}(t)\\ \text{and}\quad  g_{\rm LEFT}:t &\rightarrow t-h_{\rm LEFT}(t)/h'_{\rm LEFT}(t).
	\end{align*}
	Letting $M_{\rm MAX}:=\max\{M_{\rm LEFT},M_{\rm RIGHT} \}$ and $\delta_{\rm MIN}:=\min\{\delta_{\rm LEFT},\delta_{\rm RIGHT} \}$ and using the fact that
	\begin{equation*}
		g(t) = \begin{cases}
			g_{\rm LEFT}(t) & \text{if}\;\; t\leq 0\\
			g_{\rm RIGHT}(t) & \text{if}\;\; t \geq 0,
		\end{cases}
	\end{equation*}
	we have that 
	\begin{equation}\label{e:deltamin}
		t \in \left[-\delta_{\rm MIN},\delta_{\rm MIN} \right] \quad \implies \quad |g(t)| < \frac{M_{\rm MAX}}{2}|t|^2.
	\end{equation}
\end{proof}

\begin{lemma}[When Newton--Raphson rate is linear]\label{lem:NRlinear}
	Let $h \in C^m\left[-\bepsilon,\bepsilon \right]$ and $p=0$ be a root of $h$ of multiplicity $m$. Let $g:t \mapsto t-h(t)/h'(t)$. Then there exists $k<1$ and $\delta_{\rm LIN}>0$ such that 
	$$
	t \in \left[-\delta_{\rm LIN},\delta_{\rm LIN} \right]\quad \implies \quad |g(t)-p| < k|t-p|.
	$$
\end{lemma}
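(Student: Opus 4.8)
The plan is to recover the classical fact that Newton--Raphson iteration converges linearly with asymptotic ratio $(m-1)/m$ at a root of multiplicity $m$, and then to absorb the slack between $(m-1)/m$ and $1$ into the constant $k$. Recalling that $p=0$, the estimate to be proved reduces to $|g(t)| < k|t|$, so the natural object to study is the quotient $g(t)/t$. Wherever $h'(t)\neq 0$ one has the algebraic identity
\begin{equation*}
\frac{g(t)}{t} \;=\; \frac{t\,h'(t)-h(t)}{t\,h'(t)} \;=\; 1-\frac{h(t)}{t\,h'(t)},
\end{equation*}
so everything comes down to understanding $h(t)/\bigl(t\,h'(t)\bigr)$ as $t\to 0$.

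Since $h\in C^{m}[-\bepsilon,\bepsilon]$ and $0$ is a root of $h$ of multiplicity $m$, we have $h(0)=h'(0)=\dots=h^{(m-1)}(0)=0$ and $c:=h^{(m)}(0)\neq 0$. Taylor's theorem (Peano form of the remainder) then gives $h(t)=\frac{c}{m!}t^{m}+o(t^{m})$. Applying the same reasoning to $h'\in C^{m-1}[-\bepsilon,\bepsilon]$, which has $0$ as a root of multiplicity $m-1$, yields $h'(t)=\frac{c}{(m-1)!}t^{m-1}+o(t^{m-1})$; in particular $h'(t)\neq 0$ for all sufficiently small $t\neq 0$, so $g$ is well defined there. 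Dividing,
\begin{equation*}
\frac{h(t)}{t\,h'(t)} \;=\; \frac{\tfrac{c}{m!}t^{m}+o(t^{m})}{\tfrac{c}{(m-1)!}t^{m}+o(t^{m})} \;\xrightarrow[\;t\to 0\;]{}\; \frac{(m-1)!}{m!} \;=\; \frac{1}{m},
\end{equation*}
and hence $g(t)/t\to 1-\tfrac1m=\tfrac{m-1}{m}$ as $t\to 0$.

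Because $m\geq 1$ we have $\tfrac{m-1}{m}<1$, so we may fix any $k$ with $\tfrac{m-1}{m}<k<1$ (when $m=1$ the limit is $0$ and any $k\in(0,1)$ works). By the definition of the limit there is $\delta_{\rm LIN}>0$ such that $\bigl|g(t)/t\bigr|<k$ whenever $0<|t|\leq\delta_{\rm LIN}$, which is exactly $|g(t)-p|<k|t-p|$ for such $t$. I do not expect a genuine obstacle here, as this is essentially classical; the only points requiring care are (i) that $h$ is merely $C^{m}$, so that $h(t)/t^{m}$ need not be differentiable --- this is why I work with the identity for $g(t)/t$ together with Taylor's theorem with remainder, rather than factoring $h$ and differentiating --- and (ii) the degenerate point $t=p=0$, at which $g$ is the indeterminate $0/0$ when $m\geq 2$; the estimate is understood at $t\neq p$, which is all that the iteration requires, since reaching the root exactly already solves the feasibility problem.
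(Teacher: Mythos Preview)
Your argument is correct and arrives at the same asymptotic ratio $(m-1)/m$, but by a different route than the paper. The paper works with the derivative $g'(t)=h(t)h''(t)/h'(t)^2$, expands each of $h$, $h'$, $h''$ by Taylor's formula, and evaluates $\lim_{t\to p}g'(t)=\frac{(m-1)!^2}{m!(m-2)!}=\frac{m-1}{m}$; it then sets $k=\tfrac12\bigl(\tfrac{m-1}{m}+1\bigr)$, observes $g'(t)<k$ near $p$, and invokes a contraction/fixed-point theorem from a numerical analysis text to obtain $|g(t)-p|<k|t-p|$. Your approach bypasses both the second derivative and the external reference by computing the quotient $g(t)/t=1-h(t)/\bigl(t\,h'(t)\bigr)$ directly; this is more self-contained, uses only $h\in C^{m}$ (no $h''$ needed), and handles $m=1$ without having to make sense of $(m-2)!$. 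The paper's route does yield the slightly stronger statement that $g'$ extends continuously to $p$ with value $(m-1)/m$, but that extra information is not used later. Your closing remark about the endpoint $t=p$ is well taken: the strict inequality can only be meant for $t\neq p$, and the paper's formulation shares this harmless imprecision.
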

\begin{proof}
	Using the fact that
	$$
	g'(t) = \frac{h(t)h''(t)}{h'(t)^2},
	$$
	we have that
	\begin{align*}
		g'(p) &= \underset{t \rightarrow 0}{\lim} \frac{h(p+t)h''(p+t)}{h'(p+t)^2}\\
		&= \underset{t \rightarrow 0}{\lim} \frac{\left(\sum_{j=0}^m \frac{h^j(p)}{j!}t^j +O(t^{m+1})\right)\left(\sum_{j=2}^m \frac{h^j(p)}{(j-2)!}t^{j-2} +O(t^{m-1}) \right)}{\left(\sum_{j=1}^m \frac{h^j(p)}{(j-1)!}t^{j-1} +O(t^m)\right)^2}\\
		&\stackrel{(\star \star)}{=}\underset{t \rightarrow 0}{\lim} \frac{\frac{h^m(p)^2}{m!(m-2)!}t^{2m-2}+O(t^{2m-1})}{\frac{h^m(p)^2}{(m-1)!^2}t^{2m-2}+O(t^{2m-1})}\\
		&=\frac{(m-1)!^2}{m!(m-2)!} = \frac{m-1}{m}<1.
	\end{align*}
	Notice that $(\star \star)$ uses the fact that $h^j(p)=0$ for $j<m$ (since $p$ is a root of $h$ of multiplicity $m$). Letting $k = \frac{1}{2}(\frac{m-1}{m}+1)$, we have that, for $t$ sufficiently near to $p$, $g'(t)< k <1$. In particular, we may choose $\delta_{\rm LIN}$ so that 
	$$
	t \in \left[-\delta_{\rm LIN},\delta_{\rm LIN} \right] \quad \implies \quad g'(t)<k.
	$$
	Applying \cite[Theorem~2.8]{BF16}
	, we have that
	$$
	t \in \left[-\delta_{\rm LIN},\delta_{\rm LIN} \right] \quad \implies |g(t)-p| < k|t-p|,
	$$
	which shows the result.
\end{proof}

\subsection{Convergence of CRM: basic conditions}\label{s:convergence}

Throughout the rest of this section, $A$ is the graph of a function $f:\R\to\R$, $B$ is the horizontal axis $\R \times \{0\} \subset \R^2$,  $\vec{x}=(x,x') \in \R^2$, $(y,f(y)) := P_A\vec{x}$,  and $\calB_{0}(r):=\left [-r,r \right ]$. As the projection onto $B$ is computable, one can always start with the first iterate $\vec{x}_0 \in B$. In particular, we will show that for $\vec{x}_0 \in B$ started sufficiently near to the solution, the iterated scheme $\vec{x}_n:=(C_T)^n\vec{x}_0$ reduces to $\vec{x}_n:=(\CRM)^n\vec{x}_0$ and converges to the solution $\vec{0}$. In particular, the conditions we will impose on $f$ will locally prevent colinearity of $\vec{x},R_A\vec{x}$ and $R_BR_A \vec{x}$ (as will be shown in Lemma~\ref{lem:Newton--Raphson}) so that we always have Theorem~\ref{thm:subgradientdescent}\ref{case:notcolinear}, which simplifies our analysis greatly. We will consider the case when the following hold.
\begin{enumerate}[label=(\Roman*)]
	\item\label{bc1} the curve $A$ is the graph of a continuous function $f$ and where $\vec{0} \in A \cap B$ is an isolated feasible point;
	\item\label{bc2} there exists $\epsilon_1 >0$ such that $0$ is the only root of $f$ on $\calB_{0}(\epsilon_1)$;
	\item\label{bc3} $f$ is continuous on $\calB_{0}(\epsilon_1)$ and differentiable on $\calB_{0}(\epsilon_1)\setminus \{0 \}$; 
	\item\label{bc4} there exists $\epsilon_2>0$ such that ${\rm zer}f' \cap \calB_{0}(\epsilon_2) \subset \{0\}$, that is, $f'$ has no other roots in $\calB_0(\epsilon_2)$ except possibly for $0$.
\end{enumerate}
The isolated root condition we have imposed on $f$ excludes such pathological cases as $f: t \mapsto \sin(1/t)$ \big(where $t\in\R\backslash \{0\}$\big), while the analogous condition we have imposed on $f'$ further excludes such pathological cases as $f: t \mapsto t \sin (1/t)$ \big(where $t\in\R\backslash \{0\}$\big). Finally, we will assume a similar condition about $f''$ namely
\begin{enumerate}[label=(\Roman*)]\setcounter{enumi}{4}
	\item\label{bc5} there exists $\epsilon_3>0$ such that $f'$ is continuous and differentiable on $\calB_0(\epsilon_3)\setminus \{0\}$ with ${\rm zer} f'' \cap \calB_0(\epsilon_3) \subset \{0\}$.
\end{enumerate}
In other words, $f''$ has no other roots in $\calB_0(\epsilon_3)$ except, possibly, for $0$. Since the sign of $f$ does not change on $\left ]0,{\bepsilon}_1 \right]$, we may by symmetry work in the first quadrant by assuming (without loss of generality) further that
\begin{enumerate}[label=(\Roman*)]\setcounter{enumi}{5}
	\item\label{bc6} $f$ is positive on $\left ]0,{\bepsilon}_1 \right]$.
\end{enumerate}

When $f$ satisfies the conditions \ref{bc1}--\ref{bc6}, we write $\epsilon_f := \min\{\epsilon_1,\epsilon_2/2,\epsilon_3 \}$. The reason for the choice of $\epsilon_2/2$ will be apparent in Lemma~\ref{lem:Newton--Raphson}.  Since $f$ is positive on $\left ]0,{\bepsilon}_f \right]$ and $f'$ does not have a root on $]0,\epsilon_f]$, then we further have that $f$ is increasing on $\left ]0,{\bepsilon}_f \right]$, or equivalently, $f'$ is positive on $\left ]0,{\bepsilon}_f \right]$. 

\begin{remark}[When $A$ is a line segment locally]
	Our conditions have excluded the possibility that there exists $\delta >0$ such that $f''$ is zero on  $\left[0,\delta \right]$. Of course, in this case, the graph of $A$ is a line segment locally, and so we have convergence in a single step, by Theorem~\ref{thm:subgradientdescent}. Our use of the graph of a single-valued function $f$ to represent $A$ also precludes the possibility that $A$ is locally a vertical line segment, another case when local convergence is immediate. Thus, we do not lose any (new) insights by disregarding such cases.
\end{remark}

\subsection{Convergence of CRM}

In what follows, $\vec{x}_+:=C_T\vec{x}$. For the supposed conditions in the following Lemma~\ref{lem:Newton--Raphson}, if one remembers that we are only writing $x\geq 0$ because we may do so by symmetry and without loss of generality, then the conditions we have assumed, in essence, are implied by $\vec{x}$ lying in $B$ and within a certain local ball about the solution, a standard assumption used for studying local convergence.

\begin{lemma}[Newton--Raphson as a convergence rate bound]\label{lem:Newton--Raphson}
	Suppose $f$ satisfies the basic conditions \ref{bc1}--\ref{bc6} and that $\vec{x}=(x,0)$ with $0<x \leq \epsilon_f$. Then, whenever $\vec{0} \neq (y,f(y))=P_A\vec{x}$, the following hold.
	\begin{enumerate}[label=(\roman*)]
		\item\label{lem:newtonraphson1} $y \in \left]0,x\right[$
		\item\label{lem:newtonraphson2} $C_T \vec{x} = (g_f(y),0)$
		\item\label{lem:newtonraphson3} If $f''(t) > 0$ for $t \in \left]0,\epsilon_3\right]$, then $0 \leq g_f(y) < g_f(x)$;
		\item\label{lem:newtonraphson4} If $f''(t) < 0$ for $t \in \left]0,\epsilon_3\right]$, then $g_f(x) < g_f(y) \leq 0$;
		\item\label{eqn:NR} $\frac{\|\vec{x}_+\|}{\|\vec{x}\|} = \frac{|g_f(y)|}{|x|}\leq \frac{|g_f(x)|}{|x|}$.
	\end{enumerate}
\end{lemma}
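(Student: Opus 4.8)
The plan is to treat the five parts in order: \ref{lem:newtonraphson1}--\ref{lem:newtonraphson2} locate the foot of the projection and reduce $C_T\vec{x}$ to a Newton--Raphson step via Theorem~\ref{thm:subgradientdescent}, after which \ref{lem:newtonraphson3}--\ref{eqn:NR} become one-variable calculus for $g_f$. First I record the facts about the $\epsilon$'s and about $f$ that I will use. Since $f'$ (resp.\ $f''$) must be defined for \ref{bc4} (resp.\ \ref{bc5}) to make sense, $\epsilon_2,\epsilon_3\le\epsilon_1$, so $\epsilon_f=\min\{\epsilon_2/2,\epsilon_3\}$ and $2\epsilon_f\le\epsilon_2\le\epsilon_1$; moreover $f,f'>0$ on $]0,\epsilon_2]$ (for $f'$: it is a zero-free derivative there by \ref{bc4}, hence of constant sign by Darboux's theorem, and positive just right of $0$), and $f''$ has constant sign on $]0,\epsilon_3]$ by \ref{bc5} and Darboux.

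For \ref{lem:newtonraphson1}: $y\ne 0$, for $y=0$ would give $(y,f(y))=(0,f(0))=\vec{0}$ by \ref{bc1}. Since $\vec{0}\in A\cap B$, minimality of $P_A\vec{x}$ yields $(x-y)^2\le (x-y)^2+f(y)^2=\|\vec{x}-P_A\vec{x}\|^2\le\|\vec{x}\|^2=x^2$, so $y\in[0,2x]\subseteq\,]0,2\epsilon_f]\subseteq\,]0,\epsilon_2]$ — this crude bound is exactly where the factor $\tfrac12$ in $\epsilon_f$ is spent. As $\operatorname{dom}f=\R$ and $f$ is differentiable at $y\ne 0$, the first-order condition for $(y,f(y))$ to minimize $t\mapsto(x-t)^2+f(t)^2$ reads $x-y=f(y)f'(y)>0$, so $0<y<x$; in particular $y<x\le\epsilon_f\le\epsilon_3$, which places $y$ in the region where $f$ is $C^1$. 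For \ref{lem:newtonraphson2}: by \ref{lem:newtonraphson1}, $x\ne y$, and since $R_A\vec{x},R_BR_A\vec{x}$ share first coordinate $2y-x$ and are distinct (because $f(y)\ne 0$), the triple $\vec{x},R_A\vec{x},R_BR_A\vec{x}$ is colinear only when $x=y$; hence here it is not colinear, and Theorem~\ref{thm:subgradientdescent}\ref{case:notcolinear} applies (its Lipschitz hypothesis holds since $f$ is $C^1$ near $y$). Since $f$ is differentiable at $y\ne 0$, $\partial^0 f(y)=\{f'(y)\}$, so the relation $x=y+f(y)y^*$ forces $y^*=f'(y)\ne 0$, and substituting gives $C_T\vec{x}=\bigl(y-\tfrac{f(y)}{f'(y)^2}f'(y),0\bigr)=(g_f(y),0)$.

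For \ref{lem:newtonraphson3}: assume $f''>0$ on $]0,\epsilon_3]$, so $f'$ is strictly increasing there; since $f$ is continuous at $0$ with $f(0)=0$ and $C^1$ on $[\delta,\epsilon_3]$ for every $\delta>0$, one has $f(t)=\int_0^t f'(s)\,ds<t\,f'(t)$ for $t\in\,]0,\epsilon_3]$, whence $g_f(t)=t-f(t)/f'(t)>0$ (using $f'(t)>0$). Also $g_f'(t)=f(t)f''(t)/f'(t)^2>0$ on $]0,\epsilon_3]$ — the derivative formula used in the proof of Lemma~\ref{lem:NRlinear} — so $g_f$ is strictly increasing on $[y,x]\subseteq\,]0,\epsilon_3]$, and with $0<y<x$ we get $0\le g_f(y)<g_f(x)$. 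Part \ref{lem:newtonraphson4} is the mirror image: $f''<0$ makes $f'$ strictly decreasing, hence $f(t)>t\,f'(t)$ and $g_f(t)<0$, while $g_f'<0$ makes $g_f$ strictly decreasing on $[y,x]$, so $g_f(x)<g_f(y)\le 0$.

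Finally, for \ref{eqn:NR}: $\|\vec{x}\|=|x|$, and by \ref{lem:newtonraphson2} $\|\vec{x}_+\|=\|(g_f(y),0)\|=|g_f(y)|$, which gives the claimed equality; since $f''$ has constant sign on $]0,\epsilon_3]$, exactly one of \ref{lem:newtonraphson3}, \ref{lem:newtonraphson4} holds, and each delivers $|g_f(y)|\le|g_f(x)|$ (e.g.\ $0\le g_f(y)<g_f(x)$ in the convex case). I expect the one genuine obstacle to be part \ref{lem:newtonraphson1}, and within it the point that one must pin $y$ down to the interval $]0,\epsilon_2]$ on which the hypotheses are in force: the only tool available \emph{before} the first-order condition is minimality of the projection, which yields only $y\in[0,2x]$ — just barely enough, and precisely the reason $\epsilon_f$ was built with $\epsilon_2/2$. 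Everything past that is routine monotonicity bookkeeping for $g_f$.
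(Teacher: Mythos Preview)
Your proof is correct and follows the same overall architecture as the paper: locate $y$ in $]0,x[$, invoke Theorem~\ref{thm:subgradientdescent}\ref{case:notcolinear} to identify $C_T\vec{x}$ with a Newton step, and then do one-variable calculus for $g_f$. The execution differs in two places, both mildly in your favour.

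For \ref{lem:newtonraphson1}, the paper bounds $\|\vec{x}-P_A\vec{x}\|\le\min\{x,f(x)\}$ and then separately excludes $t>\epsilon_2$, $x<t\le\epsilon_2$, and $t<0$ before citing \cite[Lemma~3.4]{DT} for the relation $x=y+f(y)f'(y)$. You instead read off $y\in[0,2x]$ from the single comparison with $\vec{0}\in A$ and derive the first-order condition directly by differentiating $t\mapsto(x-t)^2+f(t)^2$. Your route is shorter and makes the role of the factor $\tfrac12$ in $\epsilon_f$ explicit; the paper's route gives the tighter intermediate bound $y\le x$ before invoking the optimality condition, but this is not needed.

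For \ref{lem:newtonraphson3}--\ref{lem:newtonraphson4}, the paper applies the mean value theorem twice (once on $[y,x]$ to compare $g_f(y)$ with $g_f(x)$, once on $[0,y]$ to get the sign of $g_f(y)$). You instead use the closed form $g_f'=ff''/(f')^2$ for monotonicity and the integral $f(t)=\int_0^t f'$ for the sign, which is cleaner and reuses the derivative identity already appearing in Lemma~\ref{lem:NRlinear}. Both arguments rely on the same convexity/concavity dichotomy forced by \ref{bc5}.
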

\begin{proof}
	\ref{lem:newtonraphson1} Since $x \leq \epsilon_2/2$ and $(0,0) \in A$, one has $\|\vec{x}-P_A(\vec{x})\| \leq \|(x,0)-(0,0)\|= x \leq \epsilon_2/2$. Since $(x,f(x)) \in A$, one also has $\|\vec{x}-P_A(\vec{x})\|\leq \|(x,0)-(x,f(x))\| = f(x)$. It then follows that
	\begin{equation}\label{lem:newtonraphson1contra}
		\|\vec{x}-P_A(\vec{x})\|\leq \min \{x,f(x)\} \leq \epsilon_2/2.
	\end{equation}
	Suppose, for a contradiction, that $P_A(\vec{x}) = (t,f(t))$ for $t \notin \left]0,x \right]$. If $t>\epsilon_2$, then $\|\vec{x}-P_A(\vec{x})\| =\|(x-t,-f(t))\| \geq \|t-x\| > \epsilon_f$ which contradicts \eqref{lem:newtonraphson1contra}. Thus $t \leq \epsilon_2$. Since $f$ is monotone increasing on $\left[0,\epsilon_2\right]$, any point $(t,f(t)) \in A$ that satisfies $\epsilon_2>t>x$ must also satisfy $f(t)> f(x)$, which forces $\|\vec{x}-P_A(\vec{x}) \| = \|(x,0)-(t,f(t))\| > f(t) > f(x)$ contradicting \eqref{lem:newtonraphson1contra}. Lastly, if $t<0$, one has $\|\vec{x}-P_A(\vec{x}) \| > x$, which is again a contradiction of \eqref{lem:newtonraphson1contra}. Thus, we have established that $y \in \left]0,x \right]$. Consequently, we also have that $f$ is Lipschitz continuous on an open ball about $y$. Since $(y,f(y)) = P_A((x,0))$, we may apply \cite[Lemma~3.4]{DT} to obtain \eqref{eqn:handy}, which simplifies to
	$$
	x = y+f(y)f'(y).
	$$
	If $f(y)=0$, then $y=0$, which is a case we may ignore. Otherwise, we have $f(y)f'(y) >0$, and so $x > y$. Thus $y \in \left]0,x\right[$.
	
	\ref{lem:newtonraphson2}: Having shown \ref{lem:newtonraphson1}, we have that 
	$$
	R_A(\vec{x}) = (2y-x,2f(y)),\;\; R_BR_A(\vec{x}) = (2y-x,-2f(y)), \;\;\text{and}\;\; \vec{x} = (x,0)
	$$
	are not colinear. Thereupon applying Theorem~\ref{thm:subgradientdescent}, we obtain \ref{lem:newtonraphson2}.
	
	\ref{lem:newtonraphson3}: By the mean value theorem, there exists $c \in \left]y,x\right[$ with
	\begin{equation}\label{lem:newtonraphson3a}
		(x-y)f'(c)=f(x)-f(y).
	\end{equation}
	Combining with the fact that $f''(t)> 0$ for $t \in \left[y,x\right]$, we have that $f'$ is monotone increasing on $\left[y,x\right]$, and so $f'(y) < f'(c) < f'(x)$. Combining with \eqref{lem:newtonraphson3a},
	\begin{align}
		(x-y)f'(x) &> f(x)-f(y),\nonumber \\
		\text{and\;so}\quad x-y -\frac{f(x)}{f'(x)} &> -\frac{f(y)}{f'(x)} > -\frac{f(y)}{f'(y)},\nonumber \\
		\text{which\;shows}\quad  x-\frac{f(x)}{f'(x)} &> y -\frac{f(y)}{f'(y)}.\label{lem:newtonraphson3b}
	\end{align}
	Applying the mean value theorem again, there exists $c' \in \left]0,y\right[$ such that
	\begin{equation}\label{lem:newtonraphson3c}
		(y-0)f'(c')=f(y)-f(0)
	\end{equation}
	Since $f'$ is monotone increasing on $\left[0,y\right]$, we have $f'(c') < f'(y)$. Combining this fact with \eqref{lem:newtonraphson3c}, we have $y f'(y) \geq f(y)$, and so 
	\begin{equation}\label{lem:newtonraphson3d}
		0 \leq y-f(y)/f'(y).
	\end{equation}
	Together, \eqref{lem:newtonraphson3b} and \eqref{lem:newtonraphson3d} show \ref{lem:newtonraphson3}. Combining with 	\ref{lem:newtonraphson2}, we have
	\begin{alignat*}{2}
		\vec{x}_+=(x_+,0), \quad \text{and}\quad 0 \leq x_+ = g_f(y) \leq g_f(x),
	\end{alignat*}
	which together yield \ref{eqn:NR}.
	
	\ref{lem:newtonraphson4}: The proof is similar to \ref{lem:newtonraphson3}, with the only change being that $f'$ is monotone \textit{decreasing} instead of \textit{increasing}, which reverses the directions of the analogous inequalities. The proof of \ref{eqn:NR} then follows in the same way.
\end{proof}

As we have already noted, because $f''$ has no roots on $\left]0,\epsilon_f \right]$, $f'$ is monotone on $\left]0,\epsilon_f \right]$. Consequently, if $f'$ is bounded on $\left]0,\epsilon_f \right]$, then we have $\lim_{t \downarrow 0} f'(t) \in \left [0, \infty \right[$ exists by the monotone convergence theorem, and if $f'$ is unbounded, then $\lim_{t \downarrow 0} f'(t) = \infty$. Taken together with Lemma~\ref{lem:Newton--Raphson}, the conditions we have assumed about $f''$ leave us with the following four natural cases to consider -- those illustrated in Figure~\ref{fig:convergence}.
\begin{enumerate}[label=$(\roman*)^\dagger$]
	\item\label{case:concave} $f''(t) < 0$ for $t \in \left[0,\epsilon_f \right]$, and $f$ is right differentiable at $0$ with
	$$
	\lim_{t \downarrow 0}f'(t) \in \left]0,\infty \right[.
	$$
	\item\label{case:concave_infinity} $f''(t) < 0$ for $t \in \left[0,\epsilon_f \right]$, and
	$$
	\lim_{t \downarrow 0}f'(t) = \infty.
	$$
	\item\label{case:convex}$f''(t) > 0$ for $t \in \left[0,\epsilon_f \right]$, and $f'(0) \neq 0$.
	\item\label{case:convex_zero}$f''(t) > 0$ for $t \in \left[0,\epsilon_f \right]$, and $f'(0) = 0$ and there exists $h \in C^m\left[-\epsilon_f,\epsilon_f \right]$ such that $h(t)=f(t)$ for $t>0$ and $0$ is a root of $h$ of multiplicity $m$ (for example, any function analytic at $0$ whose Taylor series has $0$ as a root of multiplicity greater than 1 will meet this criterion). 
\end{enumerate}

Lemma~\ref{lem:Newton--Raphson} essentially shows that the convergence rate of Newton--Raphson serves as an upper bound on the convergence rate of CRM in many cases. Now we use the generalized angle bisector theorem to obtain superlinear results in cases when Newton--Raphson cycles. Here, $\angle (\vec{a},\vec{b},\vec{c})$ denotes, as usual, the angle at which the segments $\vec{a}\vec{b}$ and $\vec{b}\vec{c}$ meet.

\begin{figure}[ht]
	\begin{center}
		\subfloat[Case~\ref{case:concave}]{\begin{tikzpicture}[scale=3]
				\draw[scale=1.0,domain=0:1,smooth,variable=\x,blue] plot ({0.5*(\x+0.5)*(\x+0.5)-0.125},{\x});
				
				
				\draw [black] (-0.5,0) -- (1,0);
				\draw [gray,dashed] (0,0) -- (0,1);
				
				\draw [fill,black] (0.5,0) circle [radius=0.015];
				\node [below] at (0.5,0) {$(y,0)$};
				
				\draw [fill,black] (0.5,.625) circle [radius=0.015];
				\node [right] at (0.5,.625) {$P_A\vec{x}$};
				
				\draw[black,dotted] (0.5,.575) -- (0.5,0.05);
				
				\draw [fill,black] (-0.191,0) circle [radius=0.015];
				\node [below] at (-0.191,0) {$\vec{x}^+$};
				\draw [black,->] (0.45,.575) -- (-0.141,0.05);
				
				\draw [fill,black] (1,0) circle [radius=0.015];
				\node [below] at (1,0) {$\vec{x}$};
				\draw [black,->] (0.95,0.05) -- (0.55,.575);

				\draw [fill,red] (.1825,0) circle [radius=0.015];
				\node [below,red] at (.1825,0) {$\vec{\tau}$};
				\draw [red,dashed] (.1825,0) -- (0.5,.625);
				\node [above right,red] at (.2,0) {$\varphi$};
				
				\draw [fill,purple] (1,1) circle [radius=0.015];
				
				\draw [fill,purple] (-0.5,0) circle [radius=0.015];
				\node[below,purple] at (-0.5,0) {$g_f(x)$};
				
				\draw[dashed, purple,->] (.95,0.965) -- (-0.45,0.045);
				\draw[dashed, purple,<-] (1,0.95) -- (1,0);

			\end{tikzpicture}\label{fig:concave}}\quad
		\subfloat[Case~\ref{case:concave_infinity}]{\begin{tikzpicture}[scale=3]
				\draw[scale=1.0,domain=0:1,smooth,variable=\x,blue] plot ({\x*\x},{\x});
				
				
				\draw [black] (-0.5,0) -- (1,0);
				\draw [gray,dashed] (0,0) -- (0,1);
				
				\draw [fill,black] (1,0) circle [radius=0.015];
				\node[below] at (1,0) {$\vec{x}$};

				\draw [fill,red] (0.45,0) circle [radius=0.015];
				\node[above right,red] at (0.45,0) {$\varphi$};
				\node[below,red] at (0.45,0) {$\vec{\tau}=(y,0)$};
				
				\draw [fill,black] (0.45,.67) circle [radius=0.015];
				\node[above] at (0.45,.67) {$P_A\vec{x}$};
				
				\draw[black,->] (0.95,0.04) -- (0.5,.63);
				
				\draw [red,dashed] (0.45,.62) -- (0.45,0.05);
				
				\draw [fill,black] (-0.45,0) circle [radius=0.015];
				\node[below] at (-0.45,0) {$\vec{x}^+$};
				
				\draw[->,black] (0.4,.63) -- (-0.4,0.05);

			\end{tikzpicture}\label{fig:concave_infinity}}
		
		\subfloat[Case~\ref{case:convex}]{\begin{tikzpicture}[scale=4.5]
				\draw[scale=1.0,domain=0:1,smooth,variable=\x,blue] plot ({\x},{0.5*(\x+0.5)*(\x+0.5)-0.125});
				
				
				\draw [black] (0,0) -- (1,0);
				\draw [gray,dashed] (0,0) -- (0,1);
				
				\draw [fill,black] (1,0) circle [radius=0.015];
				\node [below] at (1,0) {$\vec{x}$};
				
				\draw [fill,black] (0.55,.42625) circle [radius=0.015];
				\node [above left] at (0.55,.42625) {$P_A\vec{x}$};
				\draw [black,->] (.95,.05) -- (0.6,.375);
				
				\draw [fill,black] (0.144,0) circle [radius=0.015];
				\node [below left] at (0.144,0) {$\vec{x}^+$};
				
				\draw [black,->] (0.5,.375) -- (0.194,0.05);
				
				\draw [fill,purple] (1,1) circle [radius=0.015];
				\draw [fill,purple] (0.333,0) circle [radius=0.015];
				\node [below,purple] at (0.333,0) {$g_f(x)$};
				
				\draw [purple,->,dashed] (0.95,0.925) -- (0.383,0.05);
				\draw [purple,->,dashed] (1,0.05) -- (1,0.95);
				
				\draw [fill,black] (.55,0) circle [radius=0.015];
				\node [below right] at (.55,0) {$(y,0)$};
				\draw [dotted] (.55,0.05) -- (.55,.375);

			\end{tikzpicture}\label{fig:convex}}\quad
		\subfloat[Case~\ref{case:convex_zero}]{\begin{tikzpicture}[scale=4.5]
				\draw[scale=1,domain=0:1,smooth,variable=\x,blue] plot ({\x},{\x*\x});
				
				\draw [black] (0,0) -- (1,0);
				\draw [gray,dashed] (0,0) -- (0,1);
				
				\draw [fill,black] (1,0) circle [radius=0.015];	
				\node [below] at (1,0) {$\vec{x}$};

				\draw [fill,black] (0.6,0.36) circle [radius=0.015];	
				\node [above left] at (0.6,0.36) {$P_A\vec{x}$};
				
				\draw [black,->] (0.95,0.05) -- (0.625,0.335);

				\draw [fill,black] (0.3,0) circle [radius=0.015];
				\node [below left]  at (0.3,0) {$\vec{x}^+$};
				
				\draw [black,->] (0.56,0.31) -- (0.35,0.05);
				
				\draw [fill,black] (0.6,0) circle [radius=0.015];
				\node [above right] at (0.6,0) {$(y,0)$};
				
				\draw [dotted] (0.6,0.05) -- (0.6,0.335);
				
				\draw [fill,purple] (1,1) circle [radius=0.015];
				
				\draw [fill,purple] (0.5,0) circle [radius=0.015];
				\node [purple,below] at (0.5,0)  {$g_f(x)$};
				
				\draw [purple,->,dashed] (0.975,0.95) -- (0.525,0.05);
				
				\draw [purple,->,dashed] (1,0.05) -- (1,0.95);

				
			\end{tikzpicture}\label{fig:convex_horizontal}}
	\end{center}
	\caption{Cases for proof of local convergence.}\label{fig:convergence}
\end{figure}

\begin{lemma}[Cases~\ref{case:concave} and \ref{case:concave_infinity} superlinear]\label{lem:concave_infinity}
	Let $f$ satisfy the basic conditions \ref{bc1}--\ref{bc6}, $f''(t) < 0$ for $t \in \left]0,\bepsilon_3 \right]$, and $\vec{x}=(x,0)$. Then for any $K>0$ there exists $\bepsilon_{K_f}>0$ such that
	\begin{equation}
		\left(x \in \left]0,\bepsilon_{K_f} \right]\right) \implies \frac{\|\vec{x}_+\|}{\|\vec{x}\|} \leq \frac{1}{K}.
	\end{equation}
\end{lemma}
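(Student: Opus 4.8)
The plan is to bound $\|\vec{x}_+\|/\|\vec{x}\|$ directly rather than through the Newton--Raphson estimate $|g_f(x)|/|x|$ of Lemma~\ref{lem:Newton--Raphson}\ref{eqn:NR}: in the concave regime Newton--Raphson typically cycles, so $|g_f(x)|/|x|$ need not be small, and it is the slacker quantity $|g_f(y)|/|x|$ that decays. Write $\vec{x}=(x,0)$ with $0<x\le\bepsilon_f$. If $P_A\vec{x}=\vec{0}$, then $\vec{x},R_A\vec{x},R_BR_A\vec{x}$ are colinear and $\vec{x}_+=T_{A,B}\vec{x}=\vec{0}$, so the claim is trivial; hence I may assume $\vec{0}\neq(y,f(y))=P_A\vec{x}$. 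Lemma~\ref{lem:Newton--Raphson} then supplies $0<y<x$, the identity $x=y+f(y)f'(y)$, and $\vec{x}_+=(g_f(y),0)$ with $g_f(y)\le 0$ (using \ref{lem:newtonraphson4}, since $f''<0$). Consequently the numerator below is nonnegative and
$$
\frac{\|\vec{x}_+\|}{\|\vec{x}\|}=\frac{|g_f(y)|}{x}=\frac{f(y)-yf'(y)}{f'(y)\,x}.
$$
Since $x=y+f(y)f'(y)\ge y$ and $f'(y)>0$ on $\left]0,\bepsilon_f\right]$, this yields the two elementary bounds
$$
\frac{\|\vec{x}_+\|}{\|\vec{x}\|}\le\frac{f(y)}{y\,f'(y)}-1
\qquad\text{and}\qquad
\frac{\|\vec{x}_+\|}{\|\vec{x}\|}<\frac{f(y)}{f'(y)\,x}=\frac{f(y)f'(y)}{f'(y)^2\,x}\le\frac{1}{f'(y)^2},
$$
the final inequality using $f(y)f'(y)=x-y<x$.

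Next I would note that the standing assumptions force a clean dichotomy. Because $f'$ is positive and strictly decreasing on $\left]0,\bepsilon_f\right]$ (here the no-spurious-roots conditions \ref{bc4}--\ref{bc6} are what prevent $f'$ from oscillating), the limit $L:=\lim_{t\downarrow 0}f'(t)$ exists in $\left]0,\infty\right]$, and exactly one of $L=\infty$ (case~\ref{case:concave_infinity}) or $L\in\left]0,\infty\right[$ (case~\ref{case:concave}) holds. If $L=\infty$, I use the second bound: given $K>0$, choose $\delta\in\left]0,\bepsilon_f\right]$ with $f'(t)>\sqrt{K}$ for all $t\in\left]0,\delta\right]$, which is possible since $f'$ decreases to $\infty$, and set $\bepsilon_{K_f}:=\delta$. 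Then $x\le\bepsilon_{K_f}$ gives $0<y<x\le\delta$, so $f'(y)>\sqrt{K}$ and $\|\vec{x}_+\|/\|\vec{x}\|<1/f'(y)^2<1/K$.

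If instead $L\in\left]0,\infty\right[$, then (by continuity of $f$ at $0$ together with the mean value theorem) $f$ is right-differentiable at $0$ with $f'(0^+)=L$, whence $f(y)/y\to L$ and $f'(y)\to L$ as $y\downarrow 0$, so $f(y)/(yf'(y))-1\to 0$. Thus for any $K>0$ there is $\bepsilon_{K_f}\in\left]0,\bepsilon_f\right]$ with $f(y)/(yf'(y))-1<1/K$ for all $y\in\left]0,\bepsilon_{K_f}\right]$; since $y<x$, taking $x\le\bepsilon_{K_f}$ and invoking the first bound gives $\|\vec{x}_+\|/\|\vec{x}\|<1/K$. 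In both cases the required $\bepsilon_{K_f}$ has been exhibited, which completes the proof.

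The main obstacle is the case $L\in\left]0,\infty\right[$: there the crude bound $1/f'(y)^2\to 1/L^2\neq 0$ is useless, and one genuinely needs the second-order cancellation that makes $f(y)/(yf'(y))\to 1$ — equivalently, that $|g_f(y)|=(f(y)-yf'(y))/f'(y)$ is $o(y)$ because the chord slope $f(y)/y$ and the tangent slope $f'(y)$ both tend to $L$. Everything else is bookkeeping: converting the tangent-line description of $\vec{x}_+$ furnished by Theorem~\ref{thm:subgradientdescent} and Lemma~\ref{lem:Newton--Raphson}\ref{lem:newtonraphson2} into the two displayed inequalities, and verifying that the regularity hypotheses rule out oscillation of $f'$ so that the dichotomy on $L$ is exhaustive. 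One could alternatively write the numerator as $f(y)-yf'(y)=y\bigl(f'(c)-f'(y)\bigr)$ for some $c\in\left]0,y\right[$ via the mean value theorem, which makes the ``chord slope versus tangent slope'' comparison explicit and matches the angle-bisector picture underlying Figure~\ref{fig:convergence}; but the limit is cleanest stated directly in terms of $f(y)/y$.
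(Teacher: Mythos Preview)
Your proof is correct and takes a genuinely different route from the paper's. The paper proves this lemma by a trigonometric argument built around the \emph{generalized angle bisector theorem}: it introduces an auxiliary point $\vec{\tau}$ and an angle $\varphi$ (depending on whether $\lim_{t\downarrow 0}f'(t)$ is finite or infinite), bounds $\|\vec{x}_+\|/\|\vec{x}\|$ by $\|\vec{x}_+-\vec{\tau}\|/\|\vec{x}-\vec{\tau}\|$, and then uses the angle bisector identity to factor this as $\cot\bigl(\angle(\vec{x},\vec{x}_+,P_A\vec{x})\bigr)$ times a ratio of sines that is shown to tend to $0$ as $x\downarrow 0$. Your argument bypasses all of this geometry: from $\vec{x}_+=(g_f(y),0)$, $g_f(y)\le 0$, and the projection identity $x=y+f(y)f'(y)$ you extract the two elementary algebraic bounds
\[
\frac{\|\vec{x}_+\|}{\|\vec{x}\|}\le \frac{f(y)}{y\,f'(y)}-1
\qquad\text{and}\qquad
\frac{\|\vec{x}_+\|}{\|\vec{x}\|}<\frac{1}{f'(y)^2},
\]
and then dispatch the finite-$L$ and infinite-$L$ cases with the first and second bound respectively. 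This is shorter, avoids the auxiliary constructions, and makes transparent \emph{why} the concave case is superlinear: either $f'(y)\to\infty$, or the chord slope $f(y)/y$ and the tangent slope $f'(y)$ share the same finite limit $L>0$, forcing $f(y)/(yf'(y))\to 1$. The paper's approach, by contrast, is advertised as ``more broadly adaptable in $\R^\eta$'' (see the abstract and Section~\ref{s:conclusion}); your algebraic bounds lean specifically on the one-dimensional identity $x=y+f(y)f'(y)$ and the scalar Newton step, so the geometric argument may generalize where yours does not.
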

\begin{proof}
	We illustrate our construction in Figure~\ref{fig:concave} and \ref{fig:concave_infinity}. We can and do assume that $0<x < \epsilon_f$ so that the characterizations in Lemma~\ref{lem:Newton--Raphson} always hold.  For the sake of simplicity, when $\lim_{t\downarrow 0} f'(t)<\infty$, we write $f'(0):=\lim_{t\downarrow 0} f'(t)$. We also denote 
	$$
	(\varphi,\vec{\tau}):=\begin{cases}
		\left(\arctan (f'(0)),\left(y-f(y)/f'(0),0\right)\right) & \text{if}\;\; \lim_{t\downarrow 0} f'(t)<0\\
		\left(\pi/2, (y,0) \right) & \text{otherwise,}
	\end{cases}
	$$
	where $\varphi$ is an angle measure and $\tau$ is a point. By considering these two cases, we will first show that 
	\begin{equation}\label{concaveX2}
		\exists \delta_f>0 \quad \text{such\;that}\quad (x \in \left]0,\delta_f \right]) \implies \cot \left( \angle (\vec{x}, \vec{x}_+, P_A\vec{x})\right) \leq \cot \left((1/2)\varphi \right).
	\end{equation}
	
	\textbf{Case:} $\lim_{t \downarrow 0} f'(t)= \infty.$
	
	Combining this assumption with the continuity of $f'$, and the fact that $f''$ is negative on $\left[0,\bepsilon \right]$, 
	\begin{equation}\label{delta_f}
		(\exists \delta_f \in \left[0,\bepsilon \right])\quad \text{such that} \quad (t \leq \delta_f) \implies f'(t) \geq 1.
	\end{equation}
	Letting $x \in \left]0,\delta_f \right]$, we have from Lemma~\ref{lem:Newton--Raphson} that $y \in \left[0,\delta_f \right]$, and so $f'(y) \geq 1$. Consequently, we have that
	\begin{alignat*}{2}
		&& \arctan(f'(y)) = \angle (\vec{x}, \vec{x}_+, P_A\vec{x}) &\geq \pi/4 \geq (1/2)(\pi/2)=(1/2)\varphi. \\
		\text{and so}\quad && \cot \left( \angle (\vec{x}, \vec{x}_+, P_A\vec{x})\right) &\leq \cot \left((1/2)\varphi \right),
	\end{alignat*}
	which shows \eqref{concaveX2}.
	
	\textbf{Case: } $\lim_{t \downarrow 0} f'(t)< \infty.$
	
	Since $f'(0)<\infty$ and $\varphi:=\arctan(f'(0))$, there exists $\delta_f \in \left[0,\bepsilon \right]$ so that	
	$$(x \leq \delta_f) \implies \arctan (f'(t)) \geq (1/2) \arctan (f'(0)) = (1/2) \varphi,$$
	whereupon $\arctan(f'(y)) \geq (1/2)\varphi$. Consequently,  
	\begin{alignat*}{2}
		&&\arctan(f'(y)) &= \angle (\vec{x},\vec{x}_+, P_A\vec{x}) \geq (1/2)\varphi.\nonumber \\
		\text{and so}\quad && \cot \left( \angle (\vec{x}, \vec{x}_+, P_A\vec{x})\right) &\leq \cot \left((1/2)\varphi \right),
	\end{alignat*}
	which again shows \eqref{concaveX2}. This concludes our need for separate cases. 
	
	Next, notice that
	\begin{subequations}\label{sube}
	\begin{align}
	    \|\vec{x}_+\| &\leq \|\vec{x}_+-\vec{\tau}\| \label{sube1}\\
	    \text{and}\quad \|\vec{x}\| &\geq \|\vec{x}-\vec{\tau}\|.\label{sube2}
	\end{align}
	\end{subequations}
	To see why, recall that $\vec{x}_+=(g_f(x),0)$ with $g_f(x)\leq0$ (Lemma~\ref{lem:Newton--Raphson}\ref{lem:newtonraphson4}). By definition, either $\vec{\tau}=(\hat{\tau},0)$ satisfies $\hat{\tau} =y \in \left]0,x \right[$ or $\hat{\tau}=y-f(y)/f'(0)$. In the case when $\hat{\tau} = y \in \left]0,x \right[$, we have 
	\begin{align*}
	\|\vec{x}_+\| = |g_f(x)| < |y|+|g_f(x)| = |y-g_f(x)|=\|\vec{x}_+-\vec{\tau} \|,& \quad \text{which shows \eqref{sube1}}\\
	\text{and}\quad \|\vec{x}\| = |x| > |x-y| = \|\vec{x}-\vec{\tau}\|,& \quad \text{which shows \eqref{sube2}}.
	\end{align*}
	In the other case when $\hat{\tau}=y-f(y)/f'(0)$, we know that $\infty >f'(0) \geq f'(y) >0$ (this is just from the definition of $\vec{\tau}$). We have from the mean value theorem that some $c \in \left[0,y\right]$ satisfies $f(y)-f(0)=f'(c)(y-0)$, which is just $y=f(y)/f'(c)$. By monotonicity, $f'(0) \geq f'(c)$. Altogether,
	$$
	\hat{\tau}=y-\frac{f(y)}{f'(0)} \geq y-\frac{f(y)}{f'(c)} = y-y=0.
	$$
	Combined with the fact that $\vec{x}_+=(g_f(x),0)$ with $g_f(x)\leq0$, we again obtain \eqref{sube1}. Noticing further that 
	$$
	\hat{\tau} = y-\frac{f(y)}{f'(0)} \leq y < x,
	$$
	we have that $\hat{\tau} \in \left[0,x\right],$ which shows \eqref{sube2}.
	
    Now we have the following.
	\begin{subequations}\label{concaveBIG}
		\begin{align}
			\frac{\|\vec{x}_+\|}{\|\vec{x}\|} &\leq \frac{\|\vec{x}_+-\vec{\tau}\|}{\|\vec{x}-\vec{\tau}\|}\label{concaveA}\\
			&= \frac{\|\vec{x}_+-P_A\vec{x}\|}{\|\vec{x}-P_A\vec{x}\|} \cdot \frac{\sin \left( \angle (\vec{x}_+, P_A\vec{x},\vec{\tau}) \right)}{\sin \left(\angle (\vec{x},P_A\vec{x},\vec{\tau}) \right)} \label{concaveB} \\
			&= \cot \left(\angle (\vec{x}, \vec{x}_+, P_A\vec{x}) \right) \cdot \frac{\sin \left(\angle (\vec{x}_+,P_A\vec{x},\vec{\tau}) \right)}{\sin \left(\angle (\vec{x},P_A\vec{x},\vec{\tau}) \right)} \label{concaveC}\\
			&\leq \cot \left((1/2)\varphi \right) \cdot \frac{\sin \left(\angle (\vec{x}_+,P_A\vec{x},\vec{\tau}) \right)}{\sin \left(\angle (\vec{x},P_A\vec{x},\vec{\tau}) \right)}. \label{concaveD}
		\end{align}
	\end{subequations}
	Here \eqref{concaveA} follows from \eqref{sube}. We have the identity \eqref{concaveB} from the generalized angle bisector theorem, 
	and \eqref{concaveC} is mere trigonometry. Finally, \eqref{concaveD} is from \eqref{concaveX2}. Now notice that
	\begin{subequations}\label{angle0}
		\begin{align}
			\angle(\vec{x}_+,P_A\vec{x},\vec{\tau}) &= \pi - \angle (\vec{x}_+,\vec{\tau},P_A\vec{x}) - \angle (P_A\vec{x},\vec{x}_+,\vec{\tau})\label{angle0a} \\
			&=\pi - (\pi -\angle(P_A\vec{x},\vec{\tau},(y,0))) - \angle(P_A\vec{x},\vec{x}_+,\vec{\tau})\label{angle0b}\\
			&=\angle(P_A\vec{x},\vec{\tau},(y,0))-\angle(P_A\vec{x},\vec{x}_+,\vec{\tau})\nonumber \\
			&= \arctan(f'(0))-\arctan(f'(y))\label{angle0d}\\
			& \downarrow 0\;\; \text{as}\;\;x \downarrow 0. \label{angle0e}
		\end{align}
	\end{subequations}
	Here \eqref{angle0a} is simply the fact that the sum of the angles of the triangle $(\vec{x}_+,P_A\vec{x},\vec{\tau})$ is $\pi$, \eqref{angle0b} uses the fact that $\angle(P_A\vec{x},\vec{\tau},(y,0))$ is complementary to $\angle (\vec{x}_+,\vec{\tau},P_A\vec{x})$, and \eqref{angle0d} simply uses the fact that $\tan(\angle (P_A\vec{x},\vec{\tau},(y,0))) = \|P_A\vec{x}-(y,0)\|/\|(y,0)-\vec{\tau}\| =f'(0)$ while $\tan(\angle(P_A\vec{x},\vec{x}_+,\vec{\tau})) = \tan(\angle(P_A\vec{x},\vec{x}_+,(y,0))) = \|P_A\vec{x}-(y,0)\|/\|(y,0)-\vec{x}_+\| = f'(y)$. Finally, \eqref{angle0e} uses continuity and the fact that $0 \leq y < x$. Additionally, we have that
	\begin{subequations}\label{anglepi}
		\begin{align}
			\angle(\vec{x},P_A\vec{x},\vec{\tau}) &= \pi/2 - \angle(\vec{x}_+,P_A\vec{x},\vec{\tau})\label{anglepia} \\
			&\uparrow \pi/2\;\;\text{as}\;\;x \downarrow 0,\label{anglepib}
		\end{align}
	\end{subequations}
	where \eqref{anglepia} uses the fact that the sum of the two angles $\angle(\vec{x},P_A\vec{x},\vec{\tau})$ and $\angle(\vec{x}_+,P_A\vec{x},\vec{\tau})$ is $\angle(\vec{x}_+,P_A\vec{x},\vec{x}) = \pi/2$, and \eqref{anglepib} uses \eqref{angle0}. Combining \eqref{angle0} and \eqref{anglepi} and the continuity of the sine function, we have that 
	\begin{equation}\label{downtozer0}
		\frac{\sin \left(\angle (\vec{x}_+,P_A\vec{x},\vec{\tau}) \right)}{\sin \left(\angle (\vec{x},P_A\vec{x},\vec{\tau}) \right)} \downarrow 0 \;\; \text{as}\;\; x \downarrow 0.
	\end{equation}
	As a consequence of \eqref{downtozer0}, for any $K>0$, we may choose $\bepsilon_{K_f}>0$ small enough to ensure that
	\begin{equation*}
		(x \leq \bepsilon_{K_f}) \implies \frac{\sin \left(\angle (\vec{x}_+,P_A\vec{x},\vec{\tau}) \right)}{\sin \left(\angle (\vec{x},P_A\vec{x},\vec{\tau}) \right)} \leq \frac{1}{\cot \left((1/2)\varphi \right)K},
	\end{equation*}
	which combines with \eqref{concaveD} to ensure that
	\begin{equation*}
		(x \leq \bepsilon_{K_f}) \quad \implies \quad \frac{\|\vec{x}_+\|}{\|\vec{x}\|} \leq \frac{1}{K},
	\end{equation*}
	This shows the desired result.
\end{proof}
Lemma~\ref{lem:concave_infinity} essentially shows local superlinear convergence rate for a setting in which Newton--Raphson method is not guaranteed to converge at all, and for settings where it has provably failed. One such example is $t \mapsto t/\sqrt{|t|}$ (where $t\in\R$). The behaviour of CRM for this problem is shown in Figure~\ref{fig:sqrtX}. 

Note that the basic conditions \ref{bc1}--\ref{bc5} in Section~\ref{s:convergence} allowed us to prove Lemmas~\ref{lem:Newton--Raphson} and \ref{lem:concave_infinity} under the assumption of condition \ref{bc6}, requiring that the graph of $f$ sits (at least locally) in the first quadrant. Of course, there is nothing special about this quadrant. We now formalize our more general results in Theorem~\ref{thm:main}. Owing to the sheer number of cases that can be reduced to our framework, Theorem~\ref{thm:main} is wide but not all-encompassing.

\begin{theorem}\label{thm:main}
	Let $\theta:\R \rightarrow \R$ be defined such that the functions $f=|\theta|$ and $f=|\theta\circ(-\Id)|$ satisfy the basic conditions \ref{bc1}--\ref{bc5}. Then the following hold.
	\begin{enumerate}[label=(\Roman*)]
		\item\label{thm:concave} Let $\theta$ be such that the functions $f=|\theta|$ and $f=|\theta\circ(-\Id)|$ satisfy \ref{case:concave}. Let $K'>1$ and 
		\begin{equation}\label{def:epsilon}
			\bepsilon = \min \left \{\epsilon_{|\theta|},\epsilon_{|\theta\circ(-{\rm Id})|},\bepsilon_{K'_{|\theta|}},\bepsilon_{K'_{|\theta\circ(-{\rm Id})|}} \right \},
		\end{equation}
		where
		$\epsilon_{|\theta|},\epsilon_{|\theta\circ(-{\rm Id})|}$ are as defined in the basic conditions for the functions $|\theta|$ and $|\theta\circ(-{\rm Id})|$ respectively, and $\bepsilon_{K'_{|\theta|}},\bepsilon_{K'_{|\theta\circ(-{\rm Id})|}}$ are as defined in Lemma~\ref{lem:concave_infinity}. Let $\vec{x}_0 = (x_0,0)$ with $x_0 \in \left[-\bepsilon,\bepsilon\right]$. Then $\vec{x}_n \rightarrow \vec{0}$ as $n \rightarrow \infty$ with convergence rate that is never worse than $1/K'$ and ultimately either finite or quadratic.
		\item\label{thm:concave_infinity} Let $\theta$ be such that the functions $f=|\theta|$ and $f=|\theta\circ(-\Id)|$ satisfy \ref{case:concave_infinity}. Let $K'>1$ and $\epsilon$ be as in \eqref{def:epsilon}. Let $\vec{x}_0 = (x_0,0)$ with $x_0 \in \left[-\bepsilon,\bepsilon\right]$. Then $\vec{x}_n \rightarrow \vec{0}$ as $n \rightarrow \infty$ with convergence rate that is never worse than $1/K'$ and ultimately either finite or superlinear.
		
		\item\label{thm:convex} If $f=|\theta|$ satisfies \ref{case:convex}, and $\vec{x}_0 = (x_0,0)$ with $x_0 \in \left[0, \min \{\epsilon_{|\theta|},\delta_{\rm RIGHT}\}\right]$ where $\delta_{\rm RIGHT}$ is as in Lemma~\ref{lem:NR}, then $\vec{x}_n \rightarrow \vec{0}$ as $n \rightarrow \infty$ with convergence rate that is ultimately either finite or quadratic.
		
		\item\label{thm:convex_zero} Let $f=|\theta|$ satisfy \ref{case:convex_zero} and $\vec{x}_0 = (x_0,0)$ with $x_0 \in \left[0, \min \{\epsilon_{|\theta|},\delta_{\rm LIN}\}\right]$ where $\delta_{\rm LIN}$ is as defined in Lemma~\ref{lem:NRlinear} for the function $h$ specified in \ref{case:convex_zero} with $f=|\theta|$. Then $\vec{x}_n \rightarrow \vec{0}$ as $n \rightarrow \infty$ with convergence rate that is ultimately either finite or linear.
	\end{enumerate}
\end{theorem}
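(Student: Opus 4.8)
The plan is to assemble the ingredients already in place: reduce to a first-quadrant problem by symmetry, recognise each $C_T$-step via Lemma~\ref{lem:Newton--Raphson} as a step no slower than Newton--Raphson applied to the appropriate half of the curve, and then feed in the rate estimates of Lemmas~\ref{lem:NR}, \ref{lem:NRlinear} and \ref{lem:concave_infinity}. First I would set up the symmetry reduction. The sign-flip maps $(t,t')\mapsto(t,-t')$ and $(t,t')\mapsto(-t,t')$ are isometries under which $C_T$ is equivariant: the circumcenter and the reflections $R_A,R_B$ commute with isometries, $P_B$ commutes with both maps, and colinearity is preserved. Hence, whenever an iterate $\vec{x}_n=(x_n,0)\in B$ has $x_n>0$ the local dynamics is that of $A={\rm gra}\,|\theta|$, and when $x_n<0$ it is that of $A={\rm gra}\,|\theta\circ(-{\rm Id})|$; in either case condition~\ref{bc2} makes the active reduced function positive just to the right of $0$, so \ref{bc6} holds and Lemma~\ref{lem:Newton--Raphson} applies. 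By Lemma~\ref{lem:alwaysindiagonal} every iterate past the first lies in $B$, and $\vec{x}_0\in B$ by hypothesis. If at some step $P_A\vec{x}_n=\vec{0}$, then $\vec{x}_n,R_A\vec{x}_n,R_BR_A\vec{x}_n$ are colinear and the definition of $C_T$ (case~\ref{colinear2}) gives $\vec{x}_{n+1}=\vec{0}$: this is the ``finite'' alternative. Otherwise Lemma~\ref{lem:Newton--Raphson}\ref{lem:newtonraphson1}--\ref{lem:newtonraphson2} gives that the substeps are non-colinear and $\vec{x}_{n+1}=(g_f(y_n),0)$ with $0<|y_n|<|x_n|$, where $f\in\{|\theta|,|\theta\circ(-{\rm Id})|\}$ is the active function and $g_f$ its Newton--Raphson operator; moreover \ref{eqn:NR} gives $|x_{n+1}|\le|g_f(|x_n|)|$.

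Next I would record how the orbit moves through the neighbourhood. In the convex cases \ref{case:convex}, \ref{case:convex_zero}, Lemma~\ref{lem:Newton--Raphson}\ref{lem:newtonraphson3} gives $0\le x_{n+1}<x_n$: the orbit stays on the right half-axis and strictly decreases, so only $f=|\theta|$ is ever used. In the concave cases \ref{case:concave}, \ref{case:concave_infinity}, Lemma~\ref{lem:Newton--Raphson}\ref{lem:newtonraphson4} gives $g_f(y_n)\le0$: the orbit alternates half-axes, using $|\theta|$ and $|\theta\circ(-{\rm Id})|$ in turn. In every case $|x_{n+1}|\le|x_n|$, so $|x_n|\le|x_0|\le\bepsilon$ for all $n$, which keeps each active reduced problem inside every neighbourhood demanded by Lemmas~\ref{lem:NR}, \ref{lem:NRlinear} and \ref{lem:concave_infinity}; this is precisely why $\bepsilon$ is the minimum in \eqref{def:epsilon} (resp. the stated minimum in \ref{thm:convex}, \ref{thm:convex_zero}).

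Then the rates drop out case by case. For \ref{thm:concave}--\ref{thm:concave_infinity}: apply Lemma~\ref{lem:concave_infinity} with $K:=K'$ to each of $|\theta|$ and $|\theta\circ(-{\rm Id})|$; since $|x_n|\le\bepsilon\le\min\{\bepsilon_{K'_{|\theta|}},\bepsilon_{K'_{|\theta\circ(-{\rm Id})|}}\}$, we get $\|\vec{x}_{n+1}\|/\|\vec{x}_n\|\le1/K'$ at every step, hence $\|\vec{x}_n\|\le\bepsilon/(K')^n\to0$ and the rate is never worse than $1/K'$; running the same lemma with $K\to\infty$ gives $\limsup_n\|\vec{x}_{n+1}\|/\|\vec{x}_n\|=0$, i.e. superlinear, and in case~\ref{case:concave}, where $g_f$ is the Newton--Raphson operator at a simple root and \ref{eqn:NR} bounds the $C_T$-rate by the Newton--Raphson rate, this refines to quadratic. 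For \ref{thm:convex}: Lemma~\ref{lem:NR} applied to $h_{\rm RIGHT}$ from \ref{case:convex} on $[0,\delta_{\rm RIGHT}]$ yields $M$ with $|g_f(x_n)|\le\frac{M}{2}|x_n|^2$, so $|x_{n+1}|\le\frac{M}{2}|x_n|^2$ by \ref{eqn:NR} (quadratic, or finite if some $x_{n+1}=0$). For \ref{thm:convex_zero}: Lemma~\ref{lem:NRlinear} applied to the $C^m$ extension $h$ from \ref{case:convex_zero} on $[0,\delta_{\rm LIN}]$ yields $k<1$ with $|g_f(x_n)|<k|x_n|$, so $|x_{n+1}|<k|x_n|$ by \ref{eqn:NR} (linear, or finite).

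The main obstacle is the bookkeeping in the concave cases, where the orbit ping-pongs between the two reduced problems: one must check that the sign-flip equivariance of $C_T$ is genuine and that \emph{both} reduced functions see their current iterate inside the admissible neighbourhood on every alternate step -- both points being absorbed into the single estimate $|x_{n+1}|\le|x_n|$ together with the choice of $\bepsilon$. A lesser technicality is confirming that the hypotheses of \ref{case:convex} and \ref{case:convex_zero} supply exactly the $C^2$, resp. $C^m$, data on the right half-neighbourhood needed to invoke Lemmas~\ref{lem:NR} and \ref{lem:NRlinear}; the remainder is routine assembly of the estimates already established.
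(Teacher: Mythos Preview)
Your proposal is correct and follows essentially the same approach as the paper: both use Lemma~\ref{lem:Newton--Raphson}\ref{eqn:NR} to bound the $C_T$-rate by the Newton--Raphson rate, then invoke Lemma~\ref{lem:concave_infinity} for the concave cases \ref{thm:concave}--\ref{thm:concave_infinity} (first with $K=K'$ to get into the basin, then with arbitrary $K$ for superlinearity) and Lemmas~\ref{lem:NR} and~\ref{lem:NRlinear} together with \ref{eqn:NR} for \ref{thm:concave}, \ref{thm:convex}, and \ref{thm:convex_zero}. You are more explicit than the paper about the symmetry reduction (which the paper absorbs into the ``without loss of generality'' introduction of condition~\ref{bc6}) and about tracking the orbit to keep it inside the admissible neighbourhood; one small ordering caveat is that in the concave cases your claim $|x_{n+1}|\le|x_n|$ does not follow from Lemma~\ref{lem:Newton--Raphson}\ref{lem:newtonraphson4} alone but rather from Lemma~\ref{lem:concave_infinity} with $K=K'>1$, so it should be read as an induction.
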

\begin{proof}
	\ref{thm:concave_infinity}: Suppose the convergence is not finite. Combining our assumptions with Lemma~\ref{lem:concave_infinity}, 
	\begin{equation*}
		\frac{\|\vec{x}_{n+1}\|}{\|\vec{x}_n\|} \leq \frac{1}{K'},
	\end{equation*}
	which shows that $x_n \rightarrow 0$ with convergence rate that is no worse than $1/K'$. From Lemma~\ref{lem:concave_infinity} we also have that, for any $K>0$, there exist constants $\bepsilon_{K_{|\theta|}}$ and $\bepsilon_{K_{|\theta \circ ({\rm -Id}) |}}$ such that  
	\begin{equation}\label{concA}
		\left(x_n \in \left[-\bepsilon_{K_{|\theta|}},\bepsilon_{K_{|\theta|}} \right] \cap \left[-\bepsilon_{K_{|\theta\circ(-{\rm Id})|}},\bepsilon_{K_{|\theta\circ(-{\rm Id})|}} \right]\right) \implies \frac{\|\vec{x}_{n+1}\|}{\|\vec{x}_n\|}\leq \frac{1}{K}.
	\end{equation}
	Since $x_n \rightarrow 0$, we have that there exists $N_K$ sufficiently large that 
	\begin{equation}\label{concB}
		(n \geq N_K) \implies x_n \in \left[-\bepsilon_{K_{|\theta|}},\bepsilon_{K_{|\theta|}} \right] \cap \left[-\bepsilon_{K_{|\theta\circ(-{\rm Id})|}},\bepsilon_{K_{|\theta\circ(-{\rm Id})|}} \right]
	\end{equation}
	Combining \eqref{concA} and \eqref{concB}, we have that 
	\begin{equation*}
		(n \geq N_K) \implies \frac{\|\vec{x}_{n+1}\|}{\|\vec{x}_n\|} \leq \frac{1}{K},
	\end{equation*}
	which shows the claimed superlinear convergence.
	
	\ref{thm:concave}: By the same arguments we just used for \ref{thm:concave_infinity}, we have that $x_n \rightarrow 0$. Suppose the convergence is not finite. For $n$ sufficiently large, $x_n \in \left[-\delta_{\rm MIN},\delta_{\rm MIN} \right]$ where $\delta_{\rm MIN}$ is as defined in Lemma~\ref{lem:NR}, and so we may combine with Lemma~\ref{lem:Newton--Raphson}\ref{eqn:NR} to obtain
	\begin{equation*}
		\frac{\|\vec{x}_{n+1}\|}{\|\vec{x}_n\|^2} \leq \frac{|g_\theta(x_n)|}{|x_n|^2} < \frac{M_{\rm MAX}}{2},
	\end{equation*}
	where $M_{\rm MAX}$ is as given in Lemma~\ref{lem:NR}, which shows the eventual quadratic convergence rate.
	
	\ref{thm:convex}: Suppose the convergence is not finite. Because $x_n \in \left[0, \min \{\epsilon_{|\theta|},\delta_{\rm RIGHT}\}\right]$, applying Lemma~\ref{lem:NR} together with Lemma~\ref{lem:Newton--Raphson}\ref{eqn:NR}  
	yields
	\begin{equation*}
		\frac{\|\vec{x}_{n+1}\|}{\|\vec{x}_n\|^2} = \frac{|x_{n+1}|}{|x_n|^2} \leq \frac{|g_\theta(x_n)|}{|x_n|^2} < \frac{M_{\rm RIGHT}}{2},
	\end{equation*}
	which shows the quadratic convergence.
	
	\ref{thm:convex_zero}: Suppose the convergence is not finite. Using the fact that $f=|\theta|$ satisfies \ref{case:convex_zero}, we apply Lemma~\ref{lem:NRlinear} for the corresponding function $h$ and combine Lemma~\ref{lem:Newton--Raphson}\ref{eqn:NR} 
	with the equality of $f$ and $h$ on $\left[0,\min \{\epsilon_{|\theta|},\delta_{\rm LIN}\}\right]$ to obtain
	\begin{equation*}
		\frac{\|\vec{x}_{n+1}\|}{\|\vec{x}_n\|} = \frac{|x_{n+1}|}{|x_n|} \leq \frac{|g_\theta(x_n)|}{|x_n|} =  \frac{|g_h(x_n)|}{|x_n|} < k,
	\end{equation*}
	where $k = \frac{1}{2}(\frac{m-1}{m}+1)<1$. This shows the linear convergence.
\end{proof}

\subsection{Plane curve examples}\label{sec:planecurveexamples}

Local convergence of CRM to a feasible point for a line $B$ together with many plane curves $A$ may be handled by piecewise appeal to Theorem~\ref{thm:main}. We will mention a few examples that highlight the importance of these results.

\begin{enumerate}
	\item We have convergence whenever $A$ is an algebraic curve and $B$ is a line. This includes the classical problems of ellipses and p-spheres \cite{BLSSS}. We also have the following results about the rate of convergence.
	\begin{enumerate}
		\item When $A$ and $B$ meet with multiplicity greater than one, as when $B$ is the horizontal axis and $A$ is the graph of $t \mapsto t^2$ (where $t\in\R$), we have linear convergence to the feasible point. This is in contrast with the setting of $DR$, where convergence is usually observed to be to a fixed point rather than a feasible point. Local convergence results about DR with plane curves have typically excluded such cases.
		\item When $A$ and $B$ meet with multiplicity one, as when $B$ is the horizontal axis and $A$ is the graph of $t \mapsto t^2-1$ (where $t\in\R$), we have superlinear convergence. When, additionally, the curves are not perpendicular at their point of meeting, the convergence is quadratic.
	\end{enumerate}
	\item We have local superlinear convergence for the case where $A$ is the graph of $t \mapsto t/\sqrt{|t|}$ \big(where $t\in\R\backslash \{0\}$\big) and $B$ is the horizontal axis, a case where the Newton--Raphson method cycles. This particular example is shown in Figure~\ref{fig:sqrtX}. Such cases highlight the importance of the projection onto the curve for preventing instability.
	\item\label{ex:colinear} As an example where the generically proper characterization of $C_T$ is useful, let $B=\R \times \{0\}$ and $A= {\rm gra}f$ with 
	\begin{equation*}
		f:t\mapsto \begin{cases}
			-t &\text{if}\;\;t \leq 1;\\
			-1 &\text{otherwise}.
		\end{cases}
	\end{equation*}
	Then, for any $x_n$ such that $P_A\vec{x}_n \in \{(t,-1)\;|\;t\geq1 \}$ has the same first coordinate as $\vec{x}_n$, one has a colinear case, whereupon $\vec{x}_{n+1}=T_{A,B}\vec{x}_n = \vec{x}_n+(0,1)$. Consequently, for $n$ sufficiently large, $P_A\vec{x}_n \in \{(t,-t)\;|\; t \in \R\}$, whereupon $\vec{x}_{n+1}=\CRM(\vec{x}_n)=(0,0) \in A \cap B$.
\end{enumerate}

\begin{figure}
	\begin{center}
		\includegraphics[width=0.4\textwidth]{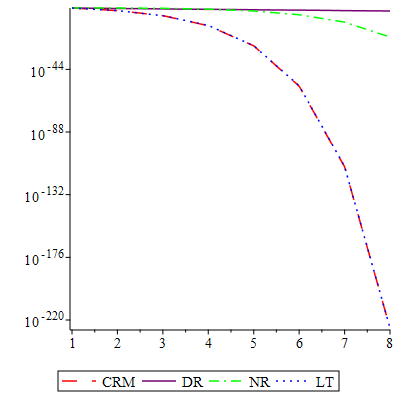}\quad \includegraphics[width=0.4\textwidth]{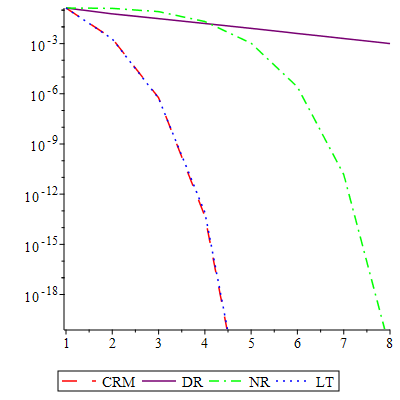}
	\end{center}
	\caption{Finding a point in the intersection of the sphere and line.}\label{fig:comparison}
\end{figure}

Succinctly put, for all cases when Douglas--Rachford exhibits local convergence to a \emph{feasible} point, CRM provides a better convergence rate. Finally, as a root finder, CRM has local convergence in all cases when Newton--Raphson does, exhibits quadratic convergence in all cases when Newton--Raphson does, and exhibits superlinear convergence in many cases when Newton--Raphson fails to converge at all. Figure~\ref{fig:comparison} shows the performance of the three methods, as measured by distance to solution, when $B$ is the horizontal axis and $A$ is the unit sphere centered at $(0,-1/2)$; the starting point used is $(0.9999,0)$. These results reveal a local robustness of CRM that may be prototypical of convergence for more general feasibility problems for more complicated problems of interest, such as phase retrieval. The method $L_T$ from \cite{lindstrom2020computable} is also shown; as its performance appears similar to CRM, it is worth remembering that the cost of computing an update of $L_T$ is double the cost of computing an update of CRM, because it requires the computation of 4 projections instead of 2.

\section{Rate guarantees and numerical discoveries in $\R^\eta$}\label{s:Rn}

While we were able to furnish Theorem~\ref{thm:subgradientdescent} in $\R^\eta$, the fast rate results of section~\ref{s:planecurves} are (initially) limited to $\R^2$. This restriction was somewhat unavoidable, because the framework we built was designed to be compatible with tying rate results explicitly to those of Newton--Raphson method. Interestingly, it is still possible to lift some of these results back into $\R^\eta$ for certain problems. Of course, given that superlinear and quadratic rate guarantees are \textit{very} strong, and also that they have been experimentally observed not to hold for many problems, one would expect that meaningful structural assumptions would be needed in order to guarantee a quadratic convergence result. 

Owing to complexity, a ``most general extension'' in $\R^\eta$ is not practicable. Indeed, even in $\R^2$, we reduced the statement of Theorem~\ref{thm:main} to just four cases (\ref{thm:concave}--\ref{thm:convex_zero}) for simplicity, but one could afford far greater generality by stating it in 16 cases (simply mix and match the properties of $|\theta|$ and $|\theta \circ (-\Id)|$ to obtain many more guarantees). 

What we will do in this section, then, is demonstrate one approach to adapting the theory from section~\ref{s:planecurves} for a selection of feasibility problems that have received significant attention in the literature: (1) spheres and hyperplanes, and (2) spheres and subspaces. Accordingly, we build one particular extension of our theory in $\R^\eta$ (Theorem~\ref{thm:spheres}) that is designed with the specific purpose of working for the examples we care about, and use this specific extension to provide rates for these problems. 

In the process, we stumble upon a much more significant discovery. Our numerical experiment for Example~\ref{ex:10sphereline} reveals an extreme sensitivity to numerical error that may cost CRM the superior convergence rates that are guaranteed by the theory. We correct for the numerical error, and recover the superior rate guaranteed by the theory. As the sensitivity is especially relevant to problem architectures that involve subspaces, all future works that use numerical experiment to study CRM with the product space formulation of feasibility problems (e.g. \cite{arefidamghani2021circumcentered,behling2019convex,dizon2020centering,DHL2020}) should take note of this sensitivity and account for it appropriately.

The particular extension theorem we  introduce will be easier to understand if we first present one of the examples that motivates it. Spheres and subspaces are of interest because they are prototypical of phase retrieval. They were studied for the Douglas--Rachford method in \cite{BS,AB,Benoist}, and the Lyapunov function discovered in \cite{Benoist} has been catalytic in other nonconvex investigations \cite{DT,giladi2019lyapunov}. Interestingly, global convergence for CRM for spheres and hyperplanes is implicitly shown in \cite[see~Remark~1]{behling2019convex}. We can furnish the first quadratic rate guarantees for these problems. For each of our computed examples, we will include the method $L_T$ from \cite{lindstrom2020computable}; for comparison, it is useful to remember that computing a step of $L_T$ has double the cost (4 projections instead of 2) of computing a step of CRM.

\begin{figure}[ht]
	\begin{center}
		\includegraphics[width=0.4\textwidth]{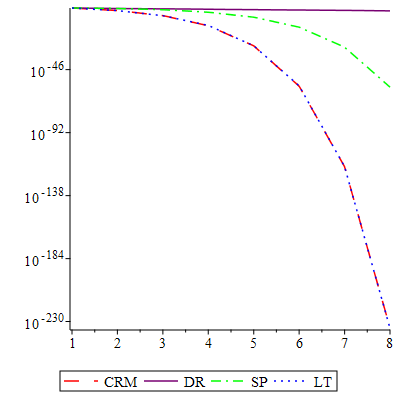}\quad \includegraphics[width=0.4\textwidth]{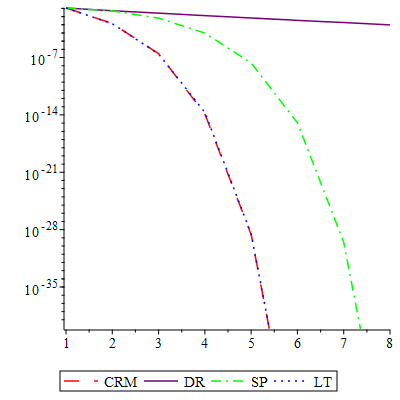}
	\end{center}
	\caption{Finding a point in the intersection of the 10-sphere and hyperplane from Example~\ref{ex:10spherehyperplane}.}\label{fig:10spherehyperplane}
\end{figure}

\begin{example}[Sphere and hyperplane]\label{ex:10spherehyperplane}
By symmetry, the (consistent) feasibility problem for a unit sphere $A$ and hyperplane not containing zero, $B$, in $\R^{\eta+1}$ may always be rotated so that $B:=\R^{\eta}\times \{b\}$ where $b \in \left]0,1\right]$. For $\vec{x}_0$ started in the axis of symmetry $\{0\}^{\eta} \times \R$, the Douglas--Rachford method is known to fail to converge, producing colinear reflections for all $n$ (see \cite{BDL18}).

For $\vec{x}_0$ started in $B$ and also outside of this chaotic set, it suffices by symmetry to consider $\vec{x}_0 =(0,\dots,0,t,b)$ for some $t \in \left]0,\infty \right[$. Suppose $\vec{x}_n=(0,\dots,0,t,b)$ for some $t \in \left]0,\infty \right[$. Because the projection of $\vec{x}_n$ onto $A$ is given by $\vec{x}_n/\|\vec{x}_n\|$, its reflection across the sphere is 
\begin{align*}
R_A\vec{x}_n &=2\vec{x}_n/\|\vec{x}_n\|-\vec{x}_n = (0,\dots,0,(2/\|\vec{x}_n\|-1)t,(2/\|\vec{x}_n\|-1)b )\\
\text{and so}\quad R_BR_A\vec{x}_n &=(0,\dots,0,(2/\|\vec{x}_n\|-1)t,2b-(2/\|\vec{x}_n\|-1)b)\\
\text{and so}\quad \CRM(\vec{x}_n) &\in \{0\}^{\eta-1} \times \R \times \{b\}\; =:D  \quad \text{(by Lemma~\ref{lem:alwaysindiagonal}\ref{p41d})}.
\end{align*}
Consequently, for all $n$, $\vec{x}_n \in \{0\}^{\eta-1} \times \R \times \{b\}$, while $P_A \vec{x}_n \in A \cap (\{0\}^{\eta-1}\times \R \times \R)$. In other words, the computation $\vec{x}_n$, including all of the associated projections and reflections, takes place entirely within a subspace of dimension $2$, and this subspace is the same for all $n$. The projections $P_A\vec{x}_n$ are projections onto the graph $\{(x,\Theta(x)) \in \R^{\eta+1} \;|\; x \in \R^\eta \}$ of the function $\Theta: \R^\eta \rightarrow \R: x \mapsto \sqrt{1-\|x\|_{\R^\eta}^2}$. More specifically, though, they are projections onto the slice of this graph that is given by $\{(x,\Theta(x)) \in \R^\eta \times \R \;|\; x \in \{0\}\times \dots \times \{0\}\times \R \times \{b\} \}$. This slice is the embedded lower-dimensional graph $\{\vec{v}+\theta(\vec{v})\cdot (0,\dots,0,1) \in \R^{\eta+1} \;|\; \vec{v} \in D \}$ of the function $\theta:D \rightarrow \R:\vec{v} \rightarrow \sqrt{1-|\vec{v}_\eta|^2}-b$. Moreover for all $n$, the point $\vec{s}: = (0,\dots,0, \sqrt{1-b^2},b)$ belongs to $A \cap B \cap {\rm aff}\{\vec{x}_n,R_A\vec{x}_n,R_BR_A\vec{x}_n \}$. With these conditions, Theorem~\ref{thm:spheres}, which we will introduce momentarily, guarantees that $\vec{x}_n$ converges to $\vec{s}$ with a quadratic rate (where the set $\{\vec{s}_n\}_n$ in the theorem is exactly equal to the singleton $\{\vec{s}\}$).

\textbf{Computed example}: In Figure~\ref{fig:10spherehyperplane} we show performance for a 10-sphere centered at $(0,\dots,0,-1/2)$. In this example, we include the method of subgradient projections for comparison, where we treat the first 9 variables as the domain space and the function for which we apply subgradient projections is $f:\vec{x}\mapsto \sqrt{1-(\vec{x}_1^2 + \dots + \vec{x}_9^2)}-1/2$. Of course, this algorithm is only defined on the $\mathbb{B}\setminus\{\vec{0}\}$ where $\mathbb{B}$ is the unit ball in $\R^9$. Moreover, for any dampened version of subgradient projections, if we start near enough to $\vec{0}$, our first update will be outside of this domain. Consequently, in order for subgradient projections to converge for this problem, we must start very close to the solution. For our experiments, we actually used CRM to obtain a starting point very near to the feasible set, and started all algorithms thereat. 
\end{example}

We now introduce our particular extension theorem, whose many conditions can all be verified for the sphere and subspace feasibility problems in Examples~\ref{ex:10spherehyperplane} and \ref{ex:10sphereline}. As we have already explained, many other generalizations in higher dimensions are possible, but Example~\ref{ex:10spherehyperplane} may be read as a template for motivating the specific conditions we chose to work with in this particular extension. 

\begin{theorem}\label{thm:spheres}
Let $A$ be a subset, and $B$ a proper subspace, of $\R^\eta$. Let $\vec{x}_0 \in B$ and $\vec{x}_n:=\CRM^n(\vec{x}_0)$. Suppose further that the following hold:
    \begin{enumerate}[label=(\roman*)]
        \item For all $n$, $\vec{x}_n \in B$;
        
        \item $\vec{x}_n \rightarrow \vec{y} \in B \cap A$ with $\xi>0$ satisfying $\vec{x}_n \in \mathbb{B}_{\xi}(\vec{y})$ for all $n$;
        
        \item There exists $\vec{u} \in B^\perp \cap \mathbb{B}_{1}(0)$ such that, for all $n$, $P_A(\vec{x}_n) = P_{{\rm gra} \Theta}(\vec{x}_n)$ where $\Theta: B \rightarrow \R$ and ${\rm gra}\Theta := \{\vec{x}+\Theta(\vec{x})\cdot \vec{u} \;|\; x \in B \}$; 
        
        \item For all $n$, the affine sets 
        $$
        \{r(\vec{x}_{n}-\vec{x}_{n+1}) \;|\;r \in \R \} \cap A \cap L \cap \mathbb{B}_{\xi}(\vec{x})
        $$
       are singletons, which we name $\vec{s}_{n+1}$ respectively;
       
        \item For the functions 
        $$
        \theta_{n}: r \mapsto \Theta \left(r\frac{(\vec{x}_{n}-\vec{s}_{n})}{\|\vec{x}_{n}-\vec{s}_{n}\|} +\vec{s}_{n}\right ),
        $$
        (which, as we have defined them, possess a root at $0$), it holds that $|\theta_n|$ and $|\theta_n \circ \Id|$ satisfy the basic conditions \ref{bc1}--\ref{bc5}. \item The functions $f_n=|\theta_n|$ satisfy \ref{case:convex} with corresponding $\epsilon_{f_n}>0$ and $\delta_{\rm RIGHT}^{f_n}>0$ (where $\delta_{\rm RIGHT}^{f_n}$ is as in Lemma~\ref{lem:NR}), as well as $\min\{\epsilon_{f_n},\delta_{\rm RIGHT}^{f_n} \}\geq \|\vec{x}_{n}-\vec{s}_{n}\|>0$ for all $n\geq N$ for some $N>0$.
    \end{enumerate}
    Then for all $n>N$, it holds that $\vec{x}_n$ satisfies
    \begin{equation*}
		\frac{\|\vec{x}_{n+1}-\vec{s}_n\|}{\|\vec{x}_n-\vec{s}_n\|^2} \leq \frac{M_{\rm RIGHT}}{2}.
	\end{equation*}
\end{theorem}
\begin{proof}
Suppose convergence is not finite. Let $n>N$. By a suitable translation, we can and do without loss of generality let $\vec{s}_n=\vec{0}$, which simplifies the notation and assures that $\|\vec{x}_n\| \leq \min\{\epsilon_{f_n},\delta_{\rm RIGHT}^{f_n} \}$. Define
$$
{\rm gra}\theta_n:=\{\vec{x}+\Theta(\vec{x}) \cdot \vec{u}\;|\; \vec{x} \in {\rm aff}\{\vec{x}_n,\vec{x}_{n+1} \}.
$$
Suppose for a contradiction that $P_A(\vec{x}_n) \notin {\rm gra}\theta_n$. Then, combining with the fact that $P_A(\vec{x}_n) \in {\rm gra}\Theta$, there exists $\vec{y} \in B \setminus {\rm aff}\{\vec{x}_n,\vec{x}_{n+1} \}$ with $P_A(\vec{x}_n)=\vec{y}+\Theta(\vec{y})\cdot \vec{u}$. Consequently, 
\begin{align}
    R_A(\vec{x}_n) &= 2\vec{y}+2\Theta(\vec{y})\cdot \vec{u} - \vec{x}\nonumber \\
    \text{and so}\quad R_BR_A (\vec{x}_n) &=2\vec{y}-2\Theta(\vec{y})\cdot \vec{u} - \vec{x}\nonumber \\
    \text{whereupon}\quad {\rm aff}\{\vec{x}_n,R_A\vec{x}_n,R_BR_A\vec{x}_n\} \cap B &= {\rm aff}\{2\vec{y}-\vec{x}_n,\vec{x}_n \}\nonumber \\
    &={\rm aff}\{\vec{y},\vec{x}_n \} \not\owns \vec{x}_{n+1},\label{eqn:contra}
\end{align}
where the final $\not\owns$ must hold because we assumed $\vec{y} \in B \setminus {\rm aff}\{\vec{x}_n,\vec{x}_{n+1} \}$. However, in view of the definition of CRM, it must hold that $\vec{x}_{n+1} \in {\rm aff}\{\vec{x}_n,R_A\vec{x}_n,R_BR_A\vec{x}_n\}$, while in view of Lemma~\ref{lem:alwaysindiagonal}\ref{p41d}, it must also hold that $\vec{x}_{n+1} \in B$. Consequently, 
$$
\vec{x}_{n+1} \in {\rm aff}\{\vec{x}_n,R_A\vec{x}_n,R_BR_A\vec{x}_n\} \cap B,
$$
and this contradicts \eqref{eqn:contra}. Thus we have shown that $P_A(\vec{x}_n) \in {\rm gra}\theta_n$. This fact combines with the fact that $P_A(\vec{x}_n)=P_{{\rm gra}\Theta}(\vec{x}_n)$ and the fact that ${\rm gra}\theta_n \subset {\rm gra} \Theta$, to guarantee that
$$
P_A(\vec{x}_n)=P_{{\rm gra}\theta_n}(\vec{x}_n).
$$
Therefore the entire CRM construction at step $n+1$ lives entirely in the 2-dimensional subspace ${\rm aff}\{\vec{x}_n,\vec{x}_{n+1} \} \oplus {\rm span}\{\vec{u}\}$. As we have $\vec{0} = \vec{s}_n \in {\rm aff}\{\vec{x}_n,\vec{x}_{n+1} \}$, we may use the change of coordinates: 
\begin{align*}
    B \oplus {\rm span}\{\vec{u}\}&\rightarrow \R \times \R: (\vec{x},\vec{x}) \mapsto \left(\langle \vec{x},\vec{x}_n \rangle / \|\vec{x}_n\|,\langle \vec{x},\vec{u} \rangle \right),
\end{align*}
whereupon $\vec{x}_n = (x_n,0)$ with $x_n = \|\vec{x}_n\| \leq \min\{\epsilon_{f_n},\delta_{\rm RIGHT}^{f_n} \} \leq \epsilon_{f_n}$. Thus we have all the necessary conditions to apply Lemma~\ref{lem:Newton--Raphson}\ref{eqn:NR}, whereupon $\|\vec{x}_{n+1}\| \leq g_{\theta_n}(\|\vec{x}_{n}\|)$. Combining with the fact that $\|\vec{x}_n\| \leq \min\{\epsilon_{f_n},\delta_{\rm RIGHT}^{f_n} \}$, Lemma~\ref{lem:NR} yields
	\begin{equation*}
		\frac{\|\vec{x}_{n+1}\|}{\|\vec{x}_n\|^2}  
		\leq \frac{g_{\theta_n}(\|\vec{x}_{n}\|)}{\|\vec{x}_n\|^2} < \frac{M_{\rm RIGHT}}{2}.
	\end{equation*}
\end{proof}

\begin{figure}[p]
	\hfill
	\subfloat[Starting with $x_0 \notin B$]{\includegraphics[width=0.49\linewidth]{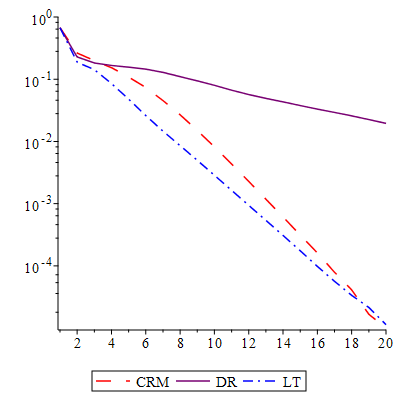}\label{fig:10spherelinea}}
	\hfill
	\subfloat[Starting with $x_0 \in B$]{\includegraphics[width=0.49\linewidth]{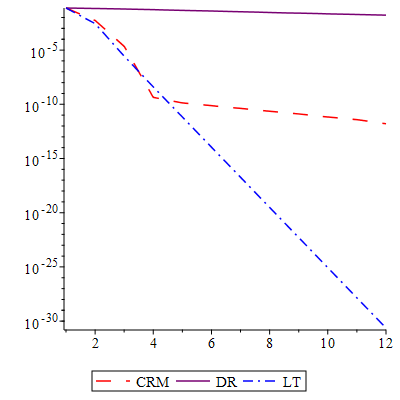}\label{fig:10spherelineb}}\hfill \\
	\begin{center}
	\subfloat[Numerically enforcing $x_n \in B$ for CRM and $L_T$]{\includegraphics[width=0.49\linewidth]{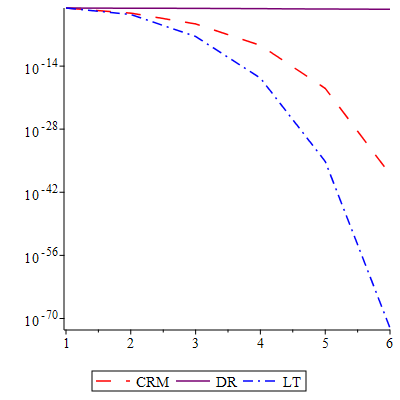}\label{fig:10spherelinec}}\end{center}
	\caption{Computational results from Example~\ref{ex:10sphereline}.}
\end{figure}

The more general subspace case in Example~\ref{ex:10sphereline} will be a straightforward adaptation of the hyperplane case in Example~\ref{ex:10spherehyperplane}. However, the phenomenon we observed in our numerical experiments is quite distinct from the hyperplane case. It demonstrates an extreme sensitivity of convergence rate for methods like CRM to small compounding numerical error that may cause sequences to depart from subspaces in which they are \textit{theoretically} guaranteed to remain. This small, numerically introduced, departure from the subspace results in a completely different convergence rate.

\begin{example}[A sphere and an affine subspace]\label{ex:10sphereline}

In $\R^{\eta+m}$, let $A$ be the unit sphere and $B$ be a subspace not containing zero. More specifically, let $B$ have dimension $\eta$. Then by symmetry, the (consistent) feasibility problem may always be rotated so that $B:=\R^\eta \times \{\vec{b}\}$ where $\vec{b} \in \left]0,1\right]^m$ and $\|\vec{b}\|_{\R^m} \leq 1$. We will consider the subset of cases where $\|\vec{b}\|_{\R^m} < 1$. For $\vec{x}_0$ started in the axis of symmetry $\{0\}^{\eta} \times \R^m$, the Douglas--Rachford method is known to fail to converge, producing colinear reflections for all $n$ (see \cite{BDL18}).

For $\vec{x}_0$ started in $B$ and also outside of this chaotic set, it suffices by symmetry to consider $\vec{x}_0 =(0,\dots,0,t,b_1,\dots,b_m)=:(0,\dots,0,t,\vec{b})$ for some $t \in \left]0,\infty \right[$. Suppose $\vec{x}_n=(0,\dots,0,t,\vec{b})$ for some $t \in \left]0,\infty \right[$. Because the projection of $\vec{x}_n$ onto $A$ is given by $\vec{x}_n/\|\vec{x}_n\|$, its reflection across the sphere is 
\begin{align*}
R_A\vec{x}_n &=2\vec{x}_n/\|\vec{x}_n\|-\vec{x}_n = (0,\dots,0,(2/\|\vec{x}_n\|-1)t,(2/\|\vec{x}_n\|-1)\vec{b})\\
\text{and so}\quad R_BR_A\vec{x}_n &=(0,\dots,0,(2/\|\vec{x}_n\|-1)t,2b-(2/\|\vec{x}_n\|-1)\vec{b})\\
\text{and so}\quad \CRM(\vec{x}_n) &\in \{0\}^{\eta-1}\times \R \times \{\vec{b}\}\; =:D . \quad \text{(by Lemma~\ref{lem:alwaysindiagonal}\ref{p41d})}
\end{align*}
Consequently, for all $n$, $\vec{x}_n \in \{0\}^{\eta-1}\times \R \times \{\vec{b}\}$, while $P_A \vec{x}_n \in A \cap (\{0\}^{\eta-1} \times \R \times {\rm span}\{\vec{b} \} )$. In other words, the computation $\vec{x}_n$, including all of the associated projections and reflections, takes place entirely within a subspace of dimension $2$, and this subspace is the same for all $n$. The projections $P_A\vec{x}_n$ are projections onto the graph $\{(x,\Theta(x)) \in \R^{\eta+m} \;|\; x \in \R^{\eta +m -1} \}$ of the function $\Theta: \R^{\eta+m-1} \rightarrow \R: x \mapsto \sqrt{1-\|x\|}$. More specifically, though, they are projections onto the slice of this graph that is given by $\{(x,\Theta(x)) \in \R^{\eta+m-1} \times \R \;|\; x \in \{0\}^{\eta-1}\times \R \times \{\vec{b}\} \}$. This slice is the embedded lower-dimensional graph $\{\vec{v}+\theta(\vec{v})\cdot (0,\dots,0,1) \in \R^{\eta+m} \;|\; \vec{v} \in D \}$ of the function $\theta:D \rightarrow \R:\vec{v} \rightarrow \sqrt{1-|\vec{v}_{\eta}|^2}-\|\vec{b}\|_{\R^m}$. Moreover $\vec{s}_n = (0,\dots,0, \sqrt{1-\|b\|_{\R^m}^2},\vec{b}) =:\vec{s}$ for all $n$. Applying Theorem~\ref{thm:spheres}, we have that $\vec{x}_n$ converges to $\vec{s}$ with a quadratic rate.

\textbf{Computed example}: For our subspace $B$, we used a line. As there is no longer a single obvious way to define subgradient projections for this problem, we omit it. In our first numerical experiment, we begin with a $x_0 \notin B$, and we observe apparently only linear convergence in Figure~\ref{fig:10spherelinea}. This is interesting, because it suggests that the condition $x_0 \in B$ cannot be relaxed without losing the quadratic rate. In our second experiment in Figure~\ref{fig:10spherelineb}, we begin with $x_0 \in B$ and see early apparent quadratic convergence degrade into only linear convergence. However, our theoretical results guarantee a quadratic rate. Investigating further, we discover that compounding numerical error has ultimately caused $\vec{x}_n$ to be outside of $B$, and this small difference has caused the loss of the superior convergence rate. For our final experiment in Figure~\ref{fig:10spherelinec}, we correct this compounding numerical error with an extra projection onto $B$; in other words, we are iteratively applying $\CRM \circ P_B$. Having corrected for the numerical error in this way, we see the quadratic convergence that the theory guarantees. This is quite interesting, because it illustrates the sensitivity of the convergence rate to very small numerical error. Purely for the sake of curiosity, in Figure~\ref{fig:10spherelinec} we also replace $L_T$ with $L_T \circ P_B$; in this case, the modification does not simply correct for numerical error, but actually defines a new algorithm. The apparent quadratic convergence of this algorithm is interesting and may merit future investigation.
\end{example}

\begin{figure}[ht]
	\hfill
	\subfloat[Partial solutions corresponding to 2,000th iterate]{\includegraphics[width=0.499\linewidth]{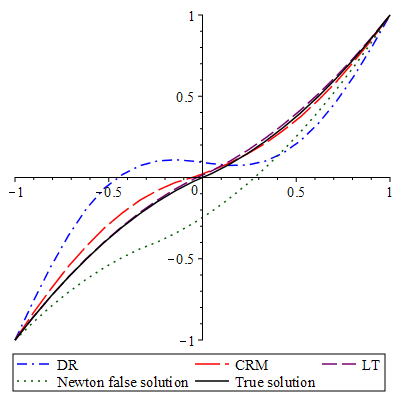}\label{fig:ODEsolutions}}
	\hfill
	\subfloat[Distance from discretized solution at $n$th iterate]{\includegraphics[width=0.499\linewidth]{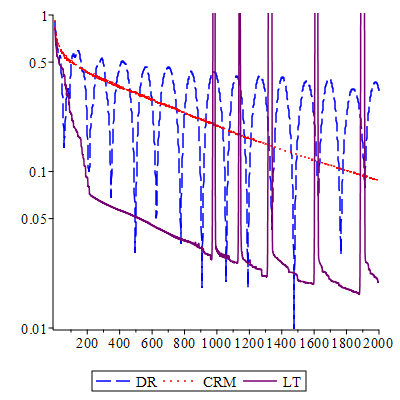}\label{fig:ODEperformance}}
	\hfill
	\caption{Computational results from Example~\ref{ex:ODE}.}
\end{figure}

For our next and final example, we choose an application that has \textit{some} similarities to the context we have worked in (one set is a subspace and the other an implicitly defined surface), but for which the rate is unknown. Naively, we are hoping to be pleasantly surprised by an improved rate, without necessarily expecting to be.

\begin{example}[A boundary value ordinary differential equation]\label{ex:ODE}

The task of numerically finding a discretized solution for a boundary value ordinary differential equation may be reformulated as a $2$-set feasibility problem, where one of the sets is an implicitly defined surface and the other is a subspace \cite{LLS,Lindstromthesis}. In \cite{LLS}, the  Douglas--Rachford method and method of alternating projections are applied to such problems. We test CRM and $L_T$ for the specific boundary value problem from \cite[Example~6.6]{LLS} (with a grid consisting of $20$ mesh points for $21$ segments; problem dimension $20^2$). The details for how to compute projections are described in \cite{LLS}. 

\textbf{Computed example}: We use the starting point that corresponds to a constant function that returns the value $0.5$ on the open interval between the boundary points (see \cite{LLS} for full details). A traditional solver---based on Newton's method---finds a false solution from this starting point, but each of the methods DR, CRM, and $L_T$ solved the problem in our experiment. Having learned the lesson about compounding numerical error from Example~\ref{ex:10sphereline}, we iterate $\CRM \circ P_B$, where $B$ is the \textit{agreement subspace} and the inclusion of $P_B$ serves to prevent compounding numerical error from pushing the sequence of updates out of $B$. We also replace $L_T$ with $L_T \circ P_B$.

Partial solutions obtained by DR, CRM, and $L_T$ (paused at 2,000 iterations), in addition to the true solution to the discretised problem and the false solution obtained by the traditional solver, are shown in Figure~\ref{fig:ODEsolutions}.  

In spite of the aforementioned similarities with our framework, we observe only linear convergence in Figure~\ref{fig:ODEperformance}. Here we record the norm distance between the shadow sequence of partial solutions for each method ($P_B \vec{x}_n$) and the true solution to the discretised problem. For a more detailed explanation of why error is reported this way, see \cite{LLS}. The progress of DR towards the solution is best measured by tracking the ``peaks'' of the visible ``tombstones'' in its performance profile; for more information about this, see \cite{LLS} or \cite{lindstrom2020computable}.

The fact that we observe only (apparently) linear convergence for CRM is unsurprising; it indicates that many conditions must all be met in order for CRM to have a superlinear convergence rate for a more general problem. When interpreting this graph, it is valuable to remember that one update of $L_T$ is twice as costly (requires 4 projections to be computed instead of 2) to compute as one update of $\CRM$. It is also worth noting that the case where $x_n, R_Ax_n, R_BR_Ax_n$ are distinct and colinear did not occur in our experiment.
\end{example}

\section{Conclusion}\label{s:conclusion}

The results in Sections~\ref{s:hypersurfaces} and \ref{s:planecurves} are natural analogs of those already in the literature for DR \cite{AB,BLSSS,BS,DT,LSS}. The framework, which relies on Theorem~\ref{thm:subgradientdescent}, is novel, and it illuminates a connection between CRM and subgradient projections in $\R^{\eta}$. In Section~\ref{s:Rn}, the roadmap for adaptation in $\R^\eta$, and the convergence rate guarantees for spheres and hyperplanes, while interesting in their own regards, are both overshadowed in importance by the numerical revelations of Example~\ref{ex:10sphereline}. Now that this numerical sensitivity has been documented, all future investigations of CRM on feasibility problems built with the traditional agreement subspace architecture \textit{must account for such numerical deviations}. If they do not do so, it is entirely possible that experiments will fail to reveal convergence rates that are theoretically achievable. The apparent quadratic convergence of the modified version of algorithm $L_T$ also clearly merits further investigation.

Even as recently as while this paper was in peer review, \cite{arefidamghani2021circumcentered} has used the epigraphical subgradient-descent characterization from Theorem~\ref{thm:subgradientdescent} to furnish rates for convex feasibility problems that are related to those here. For the product space formulation of a feasibility problem, one of the constraint sets is a subspace $B$. Under some local assumptions, (Lemma~\ref{lem:alwaysindiagonal}), subsequences $(\vec{x}_j)_j$ that have $\vec{x}_0 \in B$ satisfy $(\vec{x}_j)_j\subset B$, whereupon the 2-dimensional affine subspace ${\rm aff}(\vec{x}_j,R_A\vec{x}_j,R_BR_A\vec{x}_j) $ always contains a line $B \cap {\rm aff}(\vec{x}_j,R_A\vec{x}_j,R_BR_A\vec{x}_j)$. The rate guarantee from \cite{arefidamghani2021circumcentered}, when the angle between sets ``vanishes'' is related to Theorem~\ref{thm:main}\ref{thm:convex_zero}, while the superlinear guarantees when the angle does not vanish are related to Theorem~\ref{thm:main}\ref{thm:concave}--\ref{thm:convex}.

This is just one example of how these results are also of broader interest, because plane curve problems shed light on how such methods are thought to behave more generally. They motivated the introduction of 2 stage DR--CRM search algorithm in \cite{dizon2020centering,DHL2020}. Now that the groundwork has been laid for the prototypical settings, another natural next step of investigation is to conduct a detailed study on CRM for the phase retrieval problem specifically. Methods for accelerating convergence for Newton--Raphson are well known and are found in any numerical calculus textbook. The connections discussed in Section~\ref{s:convergence} indicate that another natural possible step is to attempt such methods with CRM. 

\subsubsection*{Data Availability}

The code used to generate the numerical results is available at \cite{DHLdata}.

\subsection*{Acknowledgements}
	SBL was supported in part by Hong Kong Research Grants Council PolyU153085/16p and by an AustMS Lift-Off Fellowship; his collaboration in this project was also made possible in part by funding from CARMA Priority Research Centre at University of Newcastle. JAH and NDD are supported by Australian Research Council Grant DP160101537.

%
%

\bibliographystyle{plain}      
\bibliography{refs}   

\end{document}